\documentclass[10pt,twoside]{article}
\usepackage{mathrsfs}
\usepackage{amssymb}
\usepackage{amsmath,color}
\usepackage[all]{xy}
\usepackage{amsthm}
\usepackage{url}
\usepackage{setspace}
\usepackage{hyperref}
\hypersetup{colorlinks,linkcolor=blue,citecolor=blue}     %%% 为引用插入超链接

\usepackage{tikz}
\usepackage{stmaryrd}
\usepackage{xcolor}
\usetikzlibrary{arrows,calc}
\usepackage{etex}

\numberwithin{equation}{section}

 \def\Hom{\mbox{\rm Hom}}

\def\Mod{\mbox{\rm Mod}\,}

\def\cone{\mbox{\rm cone}}
\def\Im{\mbox{\rm Im}}

\def\P{\mathcal {P}}
\def\A{\mathcal{A}} 
\def\Id{\mbox{\rm Id}\,} \def\Im{\mbox{\rm Im}\,}

\def\E{\mathbb{E}}

\def\del{\delta}
\def\B{\mathcal {B}}\def\C{\mathcal {C}}
\def\X{\mathcal{X}}
\def\Y{\mathcal{Y}}
\def\Z{\mathcal{Z}}
\def\I{\mathcal{I}}

\newcommand{\res}{\operatorname{res.dim}}
\newcommand{\cores}{\operatorname{cores.dim}}

\newcommand{\bsm}{\begin{smallmatrix}}
\newcommand{\esm}{\end{smallmatrix}} %%%%%%\bsm

\setlength{\textwidth}{160mm} \setlength{\textheight}{218mm}
\setlength{\topmargin}{-2mm} \setlength{\oddsidemargin}{0mm}\setlength{\evensidemargin}{0mm}

\usepackage{geometry}
\geometry{a4paper,left=2.5cm,right=2.5cm,top=3.2cm,bottom=3.0cm}
\usepackage{paralist}

\newtheorem{theorem}{Theorem}[section]
\newtheorem{proposition}[theorem]{Proposition}
\newtheorem{definition}[theorem]{Definition}
\newtheorem{remark}[theorem]{Remark}
\newtheorem{lemma}[theorem]{Lemma}

\newtheorem{corollary}[theorem]{Corollary}
\newtheorem{condition}[theorem]{Condition}
\newtheorem{theorem*}{Theorem}

\newcommand{\cotd}{\operatorname{cot.dim}}

\newcommand{\Ker}{\operatorname{Ker}}

\newcommand{\RNum}[1]{\uppercase\expandafter{\romannumeral #1\relax}}% 罗马数字

\title{ \bf Dimensions and cotorsion pairs in recollements of extriangulated categories
\footnotetext{Xin Ma was supported by the National Natural Science Foundation of China (12001168), Henan University of Engineering (DKJ2019010). Panyue Zhou was supported by the National Natural Science Foundation of China (Grant No. 12371034) and by the Hunan Provincial Natural Science Foundation of China (Grant No. 2023JJ30008).} }
\author{Xin Ma and Panyue Zhou}
\date{ }
\begin{document}

\baselineskip=16pt
\maketitle

\begin{abstract}
\begin{spacing}{1.3}
Let $(\A, \B, \C)$ be a recollement of extriangulated categories. In this paper, we provide bounds on the coresolution dimensions of the subcategories involved in $\A$, $\B$ and $\C$. We show that a hereditary cotorsion pair in $\B$ can induce hereditary cotorsion pairs in $\A$ and $\C$ under certain conditions. Besides, we construct recollements of abelian categories from a recollement of extriangulated categories with respect to the hearts of cotorsion pairs.\end{spacing}
\vspace{2mm}

\hspace{-5mm}\textbf{Keywords:} extriangulated category; recollement; coresolution dimension; cotorsion pair\\[0.2cm]
\textbf{ 2020 Mathematics Subject Classification:} 18G20; 18G10; 18G80; 18E10
\end{abstract}

\pagestyle{myheadings}
\markboth{\rightline {\scriptsize X. Ma, P. Zhou\hspace{2mm}}}%%%%%%%%%%%    %%%%%%%%    %%%%%%%%%    %%%%%%% 页上角
{\leftline{\scriptsize Dimensions and cotorsion pairs in recollements of extriangulated categories}}%%%%%%%   %%%%%    %%%%%%    %%%%%%%页上角

%\section{Introduction} %delete * to number this section

%\tableofcontents

\section{Introduction}
The concept of an extriangulated category was first introduced by Nakaoka and Palu \cite{Na}, offering a comprehensive generalization that encompasses both exact categories and triangulated categories. Moreover, there exist various examples of extriangulated categories that do not fall within the categories of exact or triangulated (refer to \cite{HZZ20P, INY18A, Na,ZZ18T} for further details). Many results from exact categories and triangulated categories can be unified within the same framework
(see \cite{GNP21, HZZ21G, HZZ20P, HZZ21P, HZ21R, INY18A, Liu, ZZ20T, ZZ21G}, which is used to extract properties related to both exact and triangulated categories.

Recently, Wang, Wei, and Zhang introduced the concept of recollements of extriangulated categories in their work \cite{WWZ20R}. This notion provides a unified generalization of recollements in both abelian and triangulated categories, as originally introduced by Beilinson, Bernstein, and Deligne \cite{BBD}. It's worth noting that recollements in abelian categories and triangulated categories are inherently intertwined. For instance, Chen \cite{C13C} demonstrated the construction of recollements in abelian categories based on recollements in triangulated categories. Furthermore, researchers have delved into gluing techniques with respect to recollements of extriangulated categories. For example, in the context of a recollement of extriangulated categories, He, Hu, and Zhou \cite{HHZ21T} explored gluing torsion pairs. Ma, Zhao, and Zhuang \cite{MZZ} delved into gluing resolving subcategories, investigating resolving resolution dimensions, while Gu, Ma, and Tan \cite{GMT} delved into the study of global dimensions and extension dimensions, among other topics.
These findings collectively represent a comprehensive extension of the classical notions of recollements in both abelian and triangulated categories.

Cotorsion pairs provide a perspective on resolutions within a given category, and they are intricately connected to other homological concepts of significant interest in category theory and representation theory. Additionally, cotorsion dimensions play a crucial role in the study of characterizations of specific rings. Nakaoka and Palu's groundbreaking work \cite{Na} (also explored in \cite{Liu}) introduced the notion of cotorsion pairs in extriangulated categories, offering a simultaneous generalization of cotorsion pairs found in exact categories \cite{L13H} and triangulated categories \cite{N13G}. Moreover, Liu and Nakaoka's research \cite{Liu} proved that the heart of a cotorsion pair in extriangulated categories is abelian, providing a unified approach suitable for both exact and triangulated categories. In the context of a recollement of abelian categories, Fu and Hu \cite{FH22T} delved into the study of cotorsion triples.

In this paper, our focus lies on the exploration of coresolution dimensions, especially cotorsion dimensions, and cotorsion pairs within the framework of a recollement of extriangulated categories. In Section 2, we offer a summary of fundamental definitions and properties of extriangulated categories that will be essential for our subsequent discussion. In Section 3, we introduce the concept of coresolution dimensions in extriangulated categories. For a recollement $(\A,\B,\C)$ of extriangulated categories, we establish connections between coresolution dimensions and certain subcategories, as presented in Theorem \ref{main-dim}. As an application, we extend our findings to module categories. Moving on to Section 4, we showcase how a (hereditary) cotorsion pair in $\B$ can trigger the emergence of (hereditary) cotorsion pairs in $\A$ and $\C$, as highlighted in Theorem \ref{main-cotor}. Additionally, we illustrate that a (hereditary) cotorsion triple in $\B$ can lead to the existence of (hereditary) cotorsion triples in $\A$ and $\C$, as outlined in Proposition \ref{prop-cot-trip}.
Finally, in Section 5, we present a method for constructing a recollement of abelian categories based on a recollement of extriangulated categories utilizing the hearts of cotorsion pairs, as detailed in Theorem \ref{main-construc}.

Throughout this paper, we assume that all subcategories are full, additive, and closed under isomorphisms.

\section{Preliminaries}
Let's take a moment to recap some key definitions and fundamental characteristics of extriangulated categories as presented in \cite{Na}. While we may skip certain particulars in this context, interested readers can delve into the comprehensive coverage available in \cite{Na}.

Consider an additive category $\mathcal{C}$ equipped with an additive bifunctor
$$\mathbb{E}: \mathcal{C}^{\mathrm{op}} \times \mathcal{C} \rightarrow \mathrm{Ab},$$ where $\mathrm{Ab}$ represents the category of abelian groups. For any objects $A, C \in \mathcal{C}$, an element $\delta \in \mathbb{E}(C,A)$ is referred to as an \emph{$\mathbb{E}$-extension}.

Let $\mathfrak{s}$ be a correspondence which associates an equivalence class
\[
\mathfrak{s}(\delta) = \xymatrix@C=0.8cm{[A \ar[r]^x & B \ar[r]^y & C]}
\]
to any $\mathbb{E}$-extension $\delta \in \mathbb{E}(C, A)$. This $\mathfrak{s}$ is termed a \emph{realization} of $\mathbb{E}$, if it ensures the commutativity of the diagrams defined in \cite[Definition 2.9]{Na}.
A triplet $(\mathcal{C}, \mathbb{E}, \mathfrak{s})$ is denoted as an \emph{extriangulated category} if it satisfies the following conditions:
\begin{itemize}
\item[(1)] $\mathbb{E} \colon \mathcal{C}^{\mathrm{op}} \times \mathcal{C} \rightarrow \mathrm{Ab}$ is an additive bifunctor.
\item[(2)] $\mathfrak{s}$ is an additive realization of $\mathbb{E}$.
\item[(3)] $\mathbb{E}$ and $\mathfrak{s}$ satisfy the compatibility conditions specified in \cite[Definition 2.12]{Na}.
\end{itemize}
For simplicity, we use the notation $\mathcal{C} =(\mathcal{C}, \mathbb{E}, \mathfrak{s})$.
\vspace{2mm}

We recall the following notations from \cite{Na}.

\begin{itemize}
\item[(1)] A sequence $\xymatrix@C=15pt{A\ar[r]^{x} & B \ar[r]^{y} & C}$ is called a {\it conflation} if it realizes some $\E$-extension $\del\in\E(C,A)$.
In this case, $\xymatrix@C=15pt{A\ar[r]^{x} & B}$ is called an {\it inflation} and $\xymatrix@C=15pt{B \ar[r]^{y} & C}$ is called a {\it deflation}. We call $$\xymatrix{A\ar[r]^{x} & B \ar[r]^{y} & C\ar@{-->}[r]^{\del}&}$$ an \emph{$\E$-triangle}.
We usually do not write this $``\delta"$ if it is not used in the argument.

\item[(2)]
Let $\X, \Y$ be subcategories of $\mathcal{C}$.  We denote by
\begin{align*}
{\rm Cone}(\X,\Y):=\{C\in \C\mid \text{ there is an }\E\text{-triangle}\xymatrix@C=15pt{X\ar[r] & Y \ar[r] & C\ar@{-->}[r]&}\text{ with }X\in \X\text{ and }Y\in \Y \};\\
{\rm Cocone}(\X,\Y):=\{C\in \C\mid \text{ there is an }\E\text{-triangle}\xymatrix@C=15pt{C\ar[r] & X \ar[r] & Y\ar@{-->}[r]&}\text{ with }X\in \X\text{ and }Y\in \Y \}.
\end{align*}
\end{itemize}

Throughout this paper,
for an extriangulated category $\C$, we assume the following condition, which is is equivalent to $\C$ being weakly idempotent complete (see \cite[Proposition 2.7]{K}).

\begin{condition}\label{WIC} {\rm \bf (WIC)} {\rm (see \cite[Condition 5.8]{Na})} Let $f:X\rightarrow Y$ and  $g:Y\rightarrow Z$ be any composable pair of morphisms in $\mathcal{C}$.
\begin{itemize}
\item[\rm (1)]  If $gf$ is an inflation, then $f$ is an inflation.
\item[\rm (2)] If $gf$ is a deflation, then $g$ is a deflation.
\end{itemize}
\end{condition}

\begin{definition}{\rm(\cite[Definitions 3.23 and 3.25]{Na})
Let $\mathcal{C}$ be an extriangulated category.
\begin{itemize}
\item[(1)] An object $P$ in $\mathcal{C}$ is called {\em projective} if for any $\mathbb{E}$-triangle $A\stackrel{x}{\longrightarrow}B\stackrel{y}{\longrightarrow}C\stackrel{}\dashrightarrow$ and any morphism $c$ in $\mathcal{C}(P,C)$, there exists $b$ in $\mathcal{C}(P,B)$ such that $yb=c$.
We denote the subcategory of projective objects in $\mathcal{C}$ by $\mathcal{P}(\mathcal{C})$.
Dually, the {\em injective} objects are defined, and the subcategory of injective objects in $\mathcal{C}$ is denoted by $\mathcal{I}(\mathcal{C})$.
\item[(2)] We say that $\mathcal{C}$ {\em has enough projectives} if for any object $M\in\mathcal{C}$, there exists an $\mathbb{E}$-triangle $$A\stackrel{}{\longrightarrow}P\stackrel{}{\longrightarrow}M\stackrel{}\dashrightarrow$$ satisfying $P\in\mathcal{P}(\mathcal{C})$. Dually, we define that $\mathcal{C}$ {\em has enough injectives}.
\end{itemize}}
\end{definition}

In the following, we will always assume that all extriangulated categories admit enough projective objects and enough injective objects.

In extriangulated categories,
the notions of the left (right) exact sequences (resp., functor) can be founded in \cite[Definitions 2.9 and 2.12]{WWZ20R} for details.
Also the notion of  extriangulated (resp., exact) functor between two extriangulated categories can be found in \cite{Ben} (resp., \cite[Definition 2.13]{WWZ20R}).

The notion of compatible morphisms is as follows.
\begin{definition}\label{right}{\rm(\cite[Definition 2.8]{WWZ20R})} Let $\mathcal{C}$ be an extriangulated category.
A morphism $f$ in $\mathcal{C}$ is called {\em compatible} provided that the following condition holds:
\begin{center}
 $f$ is both an inflation and a deflation implies that $f$ is an isomorphism.
\end{center}
That is, the class of compatible morphisms is the  class
$$\{f~|~f~\text{is~not~an~inflation},~\text{or}~f~\text{is~not~a~deflation},~\text{or}~f~\text{is~an~isomorphism}\}.$$
\end{definition}

\begin{remark}{\rm (\cite[Remark 2.15]{WWZ20R})}
Let $F\colon\A \to \B$ be an exact functor between extriangulated categories.
If $\A$ and $\B$ are abelian, then the right (resp., left) exact functor coincides with the
usual right exact functor in abelian categories, and the exact functor coincides with the
usual exact functor. If $\A$ and $\B$ are triangulated, then $F$ is left exact if and only if $F$ is a triangle
functor if and only if $F$ is right exact.
\end{remark}

Now we recall the concept of recollements of extriangulated categories \cite{WWZ20R}, which gives a simultaneous generalization of recollements of triangulated categories and abelian categories (see \cite{BBD, Fr}).

\begin{definition}\label{def-rec}{\rm(\cite[Definition 3.1]{WWZ20R}) }
Let $\mathcal{A}$, $\mathcal{B}$ and $\mathcal{C}$ be three extriangulated categories. A \emph{recollement} of $\mathcal{B}$ relative to
$\mathcal{A}$ and $\mathcal{C}$, denoted by ($\mathcal{A}$, $\mathcal{B}$, $\mathcal{C}$), is a diagram
\begin{equation}\label{recolle}
  \xymatrix{\mathcal{A}\ar[rr]|{i_{*}}&&\ar@/_1pc/[ll]|{i^{*}}\ar@/^1pc/[ll]|{i^{!}}\mathcal{B}
\ar[rr]|{j^{\ast}}&&\ar@/_1pc/[ll]|{j_{!}}\ar@/^1pc/[ll]|{j_{\ast}}\mathcal{C}}
\end{equation}
given by two exact functors $i_{*},j^{\ast}$, two right exact functors $i^{\ast}$, $j_!$ and two left exact functors $i^{!}$, $j_\ast$, which satisfies the following conditions:
\begin{itemize}
  \item [\rm (R1)] $(i^{*}, i_{\ast}, i^{!})$ and $(j_!, j^\ast, j_\ast)$ are adjoint triples.
  \item [\rm (R2)] $\Im i_{\ast}=\Ker j^{\ast}$.
  \item [\rm (R3)] $i_\ast$, $j_!$ and $j_\ast$ are fully faithful.
  \item [\rm (R4)] For each $B\in\mathcal{B}$, there exists a left exact $\mathbb{E}$-triangle sequence
$$
  \xymatrix{i_\ast i^!( B)\ar[r]^-{\theta_B}&B\ar[r]^-{\vartheta_B}&j_\ast j^\ast (B)\ar[r]&i_\ast (A)}
$$
 in $\mathcal{B}$ with $A\in \mathcal{A}$, where $\theta_B$ and  $\vartheta_B$ are given by the adjunction morphisms.
  \item [\rm (R5)] For each $B\in\mathcal{B}$, there exists a right exact $\mathbb{E}$-triangle sequence
$$
  \xymatrix{i_\ast\ar[r]( A') &j_! j^\ast (B)\ar[r]^-{\upsilon_B}&B\ar[r]^-{\nu_B}&i_\ast i^\ast (B)&}
$$
in $\mathcal{B}$ with $A'\in \mathcal{A}$, where $\upsilon_B$ and $\nu_B$ are given by the adjunction morphisms.
\end{itemize}
\end{definition}
In particular,
if $\A$, $\B$ and $\C$ are abelian categories or triangulated categories, then $(\A,\B,\C)$ is a recollement if it satisftis the following conditions.
\begin{itemize}
\item[(a)] $(i^*, i_*, i^!)$ and $(j_!, j^*, j_*)$ are adjoint triples.
\item[(b)] the functors $i_{*}, j_{!}$ and $j_{*}$ are fully faithful.
\item[(c)] $j^{*}i_{*}=0$.
\end{itemize}
\vspace{2mm}

We collect some properties of recollements of extriangulated categories (see \cite{WWZ20R}).

\begin{lemma}\label{lem-rec} Let ($\mathcal{A}$, $\mathcal{B}$, $\mathcal{C}$) be a recollement of extriangulated categories as in {\rm (\ref{recolle}).}

$(1)$ All the natural transformations
$$i^{\ast}i_{\ast}\Rightarrow\Id_{\A},~\Id_{\A}\Rightarrow i^{!}i_{\ast},~\Id_{\C}\Rightarrow j^{\ast}j_{!},~j^{\ast}j_{\ast}\Rightarrow\Id_{\C}$$
are natural isomorphisms.
Moreover, $i^{!}$, $i^{*}$ and $j^{*}$ are dense.

$(2)$ $i^{\ast}j_!=0$ and $i^{!}j_\ast=0$.

$(3)$ $i^{\ast}$ preserves projective objects and $i^{!}$ preserves injective objects.

$(3')$ $j_{!}$ preserves projective objects and $j_{\ast}$ preserves injective objects.

$(4)$ If $i^{!}$ (resp., $j_{\ast}$) is  exact, then $i_{\ast}$ (resp., $j^{\ast}$) preserves projective objects.

$(4')$ If $i^{\ast}$ (resp., $j_{!}$) is  exact, then $i_{\ast}$ (resp., $j^{\ast}$) preserves injective objects.

$(5)$ If $i^{!}$ is exact, then $j_{\ast}$ is exact.

$(5')$
If $i^{\ast}$ is exact, then $j_{!}$ is  exact.

$(6)$ If $i^{!}$ is exact, for each $B\in\mathcal{B}$, there is an $\mathbb{E}$-triangle
  \begin{equation*}\label{third}
  \xymatrix{i_\ast i^! (B)\ar[r]^-{\theta_B}&B\ar[r]^-{\vartheta_B}&j_\ast j^\ast (B)\ar@{-->}[r]&}
   \end{equation*}
 in $\mathcal{B}$ where $\theta_B$ and  $\vartheta_B$ are given by the adjunction morphisms.

$(6')$ If $i^{\ast}$ is exact, for each $B\in\mathcal{B}$, there is an $\mathbb{E}$-triangle
  \begin{equation*}\label{four}
  \xymatrix{ j_! j^\ast (B)\ar[r]^-{\upsilon_B}&B\ar[r]^-{\nu_B}&i_\ast i^\ast (B) \ar@{-->}[r]&}
   \end{equation*}
in $\mathcal{B}$ where $\upsilon_B$ and $\nu_B$ are given by the adjunction morphisms.
\end{lemma}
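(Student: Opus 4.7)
The plan is to prove the ten claims in roughly the order (1), (2), (3)--(3'), (4)--(4'), (6)--(6'), (5)--(5'), since each group feeds the next. For (1) I would invoke the standard categorical fact that in an adjunction $F \dashv G$ the right adjoint $G$ is fully faithful iff the counit $FG \Rightarrow \Id$ is invertible, and the left adjoint $F$ is fully faithful iff the unit $\Id \Rightarrow GF$ is invertible. Applied to the two adjoint triples of (R1) together with the fully-faithfulness from (R3), this yields all four natural isomorphisms in one stroke; density of $i^*$, $i^!$, $j^*$ is then immediate, since every $A \in \A$ arises as $i^*(i_*A)$ and analogously for $i^!$ and $j^*$.

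For (2) I would chain the adjunctions and apply (R2): for any $C \in \C$ and $A \in \A$,
\[
\Hom_\A(i^*j_!C, A) \cong \Hom_\B(j_!C, i_*A) \cong \Hom_\C(C, j^*i_*A) = 0,
\]
so $i^*j_!=0$ by Yoneda, and dually $i^!j_*=0$. For (3)--(3') and (4)--(4') the engine is the textbook fact that a left (resp.\ right) adjoint of an exact functor preserves projectives (resp.\ injectives); since $i_*$ and $j^*$ are exact by the definition of recollement, (3) and (3') drop out at once, and the added exactness hypotheses in (4) and (4') feed the same observation to the remaining members of the adjoint triples.

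The main obstacle will be (6)--(6'), and hence (5)--(5'), since these are the first places where the extriangulated structure — not just formal adjunction calculus — really enters. For (6), assuming $i^!$ is exact, I would write the 4-term $\E$-triangle sequence of (R4) as the splice of two $\E$-triangles $i_*i^!B \to B \to K \dashrightarrow$ and $K \to j_*j^*B \to i_*A \dashrightarrow$, and then apply the exact functor $i^!$. Using $i^!i_* \cong \Id_\A$ from (1) together with the triangle identities, $i^!$ of the first $\E$-triangle becomes $i^!B \xrightarrow{\sim} i^!B \to i^!K \dashrightarrow$, forcing $i^!K = 0$; using (2) and (1) again, $i^!$ of the second collapses to $0 \to 0 \to A \dashrightarrow$, which forces $A=0$ since the only $\E$-triangle of that shape (realizing the unique element of $\E(A,0)=0$) is the split one $0 \to A \to A$. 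The surviving three-term piece is the required $\E$-triangle, and (6') is strictly dual. Finally (5)--(5') fall out by combining (6)--(6') with (4)--(4') and Condition \ref{WIC}: exactness of $i^!$ lets $j_*$ transport conflations from $\C$ to sequences in $\B$ whose middle maps are seen to remain inflations and deflations via (WIC), so $j_*$ is exact.
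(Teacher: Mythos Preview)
The paper does not actually prove this lemma: it is stated with a reference to \cite{WWZ20R} (see the sentence preceding the lemma), so there is no in-paper proof to compare against. Your plan is therefore being measured against the argument in \cite{WWZ20R}, and for parts (1)--(4') and (6)--(6') it matches that standard route and is correct. In particular, your treatment of (6) --- splitting the (R4) sequence into two $\E$-triangles, applying the exact $i^!$, and using the triangle identity together with $i^!j_*=0$ to force first $i^!K=0$ and then $A=0$ --- is exactly right.

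The only soft spot is your sketch of (5)--(5'). The ingredients you list, namely (6)--(6') and (4)--(4'), are not the ones that drive the argument; neither the three-term $\E$-triangle of (6) nor the projective-preservation of (4) is needed. The clean proof runs as follows: given a conflation $C_1\to C_2\xrightarrow{g} C_3$ in $\C$, left exactness of $j_*$ yields an $\E$-triangle $j_*C_1\to j_*C_2\to K$ together with a compatible inflation $h_1\colon K\to j_*C_3$ with $j_*(g)=h_1h_2$. Applying the exact functor $j^*$ and using $j^*j_*\cong\Id_\C$, the factorisation $g=j^*(h_1)\,j^*(h_2)$ and (WIC) force $j^*(h_1)$ to be a deflation; since $j^*(h_1)$ is also a compatible inflation it is an isomorphism, so $j^*$ kills $\cone(h_1)$ and hence $\cone(h_1)\cong i_*A'$ for some $A'$. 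Now apply the exact $i^!$ to both $\E$-triangles and use $i^!j_*=0$ from (2) to obtain $i^!K=0$ and then $A'=0$, whence $h_1$ is an isomorphism and $j_*$ is exact. So the tools you really need for (5) are (1), (2), exactness of $i^!$, and (WIC); you should adjust your dependency list accordingly.
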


For a subcategory $\X\subseteq\C$, put $\Omega^0\X=\X$, and define $\Omega^k\X$ for $k>0$ inductively by
$$\Omega^k\X=\Omega(\Omega^{k-1}\X)={\rm Cocone}(\P,\Omega^{k-1}\X).$$
We call $\Omega^k\X$ the $k$-th syzygy of $\X$.

Dually we define the $k$-th cosyzygy $\Sigma^k\X$ by
$\Sigma^0\X=\X$ and $\Sigma^k\X={\rm Cone}(\Sigma^{k-1}\X,\I)$ for $k>0$.

Liu and Nakaoka \cite{Liu} defined higher extension groups in an extriangulated category as $$\E(A,\Sigma^kB)\simeq\E(\Omega^kA,B)~~\mbox{for}~~k\geq0.$$
For convenience, we denote $\E(A,\Sigma^kB)\simeq\E(\Omega^kA,B)$ by $\E^{k+1}(A,B)$ for $k\geq0$.
They proved the following result, which is an important tool in relative homological theory of extriangulated categories and can be called ``dimension shifting" as a strategy.
\begin{lemma}
Let $\C$ be an extriangulated category. Assume that
$$\xymatrix{A\ar[r]^{f}& B\ar[r]^{g}&C\ar@{-->}[r]^{\delta}&}$$
is an $\E$-triangle in $\C$. Then for any object $X\in\C$ and $k\geq1$, we have the following exact sequences:
$$\cdots\xrightarrow{}\E^k(X,A)\xrightarrow{}\E^k(X,B)\xrightarrow{}\E^k(X,C)\xrightarrow{}\E^{k+1}(X,A)\xrightarrow{}\E^{k+1}(X,B)\xrightarrow{}\cdots;$$
$$\cdots\xrightarrow{}\E^k(C,X)\xrightarrow{}\E^k(B,X)\xrightarrow{}\E^k(A,X)\xrightarrow{}\E^{k+1}(C,X)\xrightarrow{}\E^{k+1}(B,X)\xrightarrow{}\cdots.$$
\end{lemma}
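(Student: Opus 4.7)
The plan is to proceed by induction on $k$, with the engine being the standard $6$-term long exact sequence
$$\Hom(X,A)\to\Hom(X,B)\to\Hom(X,C)\to\E(X,A)\to\E(X,B)\to\E(X,C)$$
associated to an $\E$-triangle, as established in \cite{Na}, together with a horseshoe-type construction producing a ``cosyzygy-shifted'' $\E$-triangle $\Sigma A\to\Sigma B\to\Sigma C\dashrightarrow$ from the given one. Concretely, choose $\E$-triangles $A\to I_A\to\Sigma A\dashrightarrow$ and $C\to I_C\to\Sigma C\dashrightarrow$ with $I_A,I_C\in\I(\C)$, and apply axiom (ET4) together with its dual to assemble a compatible middle $\E$-triangle $B\to I_A\oplus I_C\to\Sigma B\dashrightarrow$ fitting into a $3\times 3$ diagram of $\E$-triangles whose bottom row is the desired shifted triangle.

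With this tool in hand, the case $k=1$ is obtained by applying the $6$-term sequence of \cite{Na} to the shifted triangle and invoking the defining isomorphism $\E^{k+1}(X,-)\simeq\E(X,\Sigma^k(-))$, which yields
$$\Hom(X,\Sigma A)\to\Hom(X,\Sigma B)\to\Hom(X,\Sigma C)\to\E^2(X,A)\to\E^2(X,B)\to\E^2(X,C);$$
this splices onto the original $6$-term sequence via a connecting morphism constructed from $\delta$ together with the map $A\to I_A$. The inductive step iterates the horseshoe construction, passing from the $\E$-triangle $\Sigma^{k-1}A\to\Sigma^{k-1}B\to\Sigma^{k-1}C\dashrightarrow$ at level $k$ to its cosyzygy shift at level $k+1$, and then invoking the dimension-shifting identification once more. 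The contravariant sequence is proved dually, with syzygies $\Omega$, projective covers and the isomorphism $\E^{k+1}(-,X)\simeq\E(\Omega^k(-),X)$ replacing their cosyzygy counterparts.

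The main obstacle I expect is the horseshoe step itself: producing the compatible middle $\E$-triangle $B\to I_A\oplus I_C\to\Sigma B\dashrightarrow$ and verifying that the induced bottom row is actually realised by some $\E$-extension requires a careful double application of (ET4) and its dual, and uses Condition (WIC) to ensure that the relevant composites retain their inflation/deflation status. Once this construction is secured, the splice at each level is a routine diagram chase: injectivity of $I_A$, $I_A\oplus I_C$ and $I_C$ forces the vanishing of $\E(X,I_\bullet)$, which collapses the appropriate portion of the shifted $6$-term sequence and produces the required connecting exactness at each junction $\E^k(X,C)\to\E^{k+1}(X,A)$.
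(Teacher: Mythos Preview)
The paper does not actually prove this lemma: it is stated without proof and attributed to Liu and Nakaoka \cite{Liu} (see the sentence immediately preceding the lemma, ``They proved the following result\ldots''). So there is no in-paper argument to compare against; the result is imported wholesale from the cited reference.

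Your outline is correct and is essentially the standard argument, which is also how the result is obtained in \cite{Liu}: build a $3\times 3$ diagram via the horseshoe construction (using enough injectives for the covariant sequence, enough projectives for the contravariant one), observe that the bottom row $\Sigma A\to\Sigma B\to\Sigma C\dashrightarrow$ is again an $\E$-triangle, and iterate. One small remark: the horseshoe step does not actually require Condition~(WIC); it follows from the $3\times 3$-type diagram lemmas already available in \cite{Na} (e.g.\ their Proposition~3.15), so you can drop that dependence. The splicing at each junction $\E^k(X,C)\to\E^{k+1}(X,A)$ is, as you say, forced by the vanishing $\E(X,I_\bullet)=0$ on the middle row, and the resulting connecting map is well-defined because the columns of the $3\times 3$ diagram identify $\E(X,C)$ with a quotient of $\Hom(X,\Sigma C)$ compatibly with the bottom-row connecting homomorphism.
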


\section{Coresolution dimensions relative to a subcategory}

Recall from \cite{ZZ20T} that an {\em $\mathbb{E}$-triangle sequence} is defined as a sequence
 $$\cdots {\longrightarrow}X_{n+2}\stackrel{d_{n+2}}{\longrightarrow}X_{n+1}\stackrel{d_{n+1}}{\longrightarrow} X_{n}\stackrel{d_{n}}{\longrightarrow}X_{n-1}{\longrightarrow}\cdots $$
 in $\mathcal{C}$ such that for any $n$,
 there are $\mathbb{E}$-triangles $K_{n+1}\stackrel{g_{n}}{\longrightarrow}X_{n}\stackrel{f_{n}}{\longrightarrow}K_{n}\stackrel{}\dashrightarrow$ and the differential $d_n=g_{n-1}f_n$.

 As a dual of \cite[Definition 4.1]{MZZ}, we introduce the notion of coresolution dimension for a subcategory of $\mathcal{C}$.
 \begin{definition}
{\rm Let $\X$ be a subcategory of $\mathcal{C}$ and $C$ an object in $\mathcal{C}$.
The {\bf $\X$-coresolution dimension} of $C$, written $\X$-$\cores C$, is defined by
\begin{align*}
\X\text{-}\cores C=&\inf \{n \geq 0\mid\text{ there exists an } \mathbb{E}\text{-triangle sequence}\\
&\xymatrix@C=15pt
{0\ar[r]&C\ar[r]&X_{0}\ar[r]&\cdots\ar[r]&X_{n-1}\ar[r]&X_{n}\ar[r]&0} \text{ in } \mathcal{C} \text{ with } X_{i}\in \X \text{ for }0\leq i\leq n\}.
\end{align*}
For a subcategory $\C'$ of $\C$, the coresolution dimension of $\mathcal{C'}$, denoted by $\X\text{-}\cores \mathcal{C'}$, is defined as
$$\X\text{-}\cores  \mathcal{C'}:=\sup\{\X\text{-}\cores C\mid C\in \mathcal{C'}\};$$}
in particular, $\X\text{-}\cores  \mathcal{C}$ is defined.
\end{definition}
Let $\X$ be a subcategory of $\C$, we define two subcategories as follows   %%associated
$$\X^\perp:=\{C\in \C\mid \E^{i}(X,C)=0 \text{ for any }i\geq 1 \text{ and }X\in \X\}.$$
$$^\perp\X:=\{C\in \C\mid \E^{i}(C,X)=0 \text{ for any }i\geq 1 \text{ and }X\in \X\}.$$
Clearly, $\I(\C)\subseteq\X^\perp$. In particular, if $\X=\C$, then $\X^\perp=\I(\C)$; if $\X=\P(\C)$, then $\X^\perp=\C$.

Here we obtain the following characterization of the coresolution dimensions related to the subcategory $\X^\perp$.

\begin{lemma}\label{lem-eq}
Let $\X$ be a subcategory of $\C$. For any $C\in \C$ and an integer $n\geq 0$, the following statements are equivalent.
\begin{itemize}
\item[\rm (1)] $\X^\perp\text{-}\cores C\leq n$.
\item[\rm (2)] $\E^{n+k}(X,C)=0$ for any $X\in \X$ and any $k\geq 1$.
\item[\rm (3)] if $\xymatrix@C=15pt
{0\ar[r]&C\ar[r]&Y_{0}\ar[r]&\cdots\ar[r]&Y_{n-1}\ar[r]&Y_{n}\ar[r]&0}$ is an $\E$-triangle sequence with $Y_{i}\in \X^\perp$ for any $0\leq i\leq n-1$, then $Y_{n}\in\X^\perp$.
\end{itemize}
\end{lemma}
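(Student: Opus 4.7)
The plan is to prove the three equivalences cyclically as (1)$\Rightarrow$(2)$\Rightarrow$(3)$\Rightarrow$(1), with the dimension-shifting lemma recalled just before the statement being the main engine, together with the standing assumption that $\C$ has enough injectives and the trivial inclusion $\I(\C)\subseteq\X^\perp$.

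For (1)$\Rightarrow$(2), I would start from an $\E$-triangle sequence $0\to C\to Y_0\to\cdots\to Y_n\to 0$ with every $Y_i\in\X^\perp$. By the definition of an $\E$-triangle sequence this decomposes into $\E$-triangles $K_{i+1}\to Y_i\to K_i\dashrightarrow$ with $K_0=Y_n$ and $C$ playing the role of $K_{n+1}$. Fix $X\in\X$ and $k\geq 1$. Applying $\E^\ast(X,-)$ to the first triangle $C\to Y_0\to K_1$, the portion $\E^{n+k-1}(X,Y_0)\to \E^{n+k-1}(X,K_1)\to \E^{n+k}(X,C)\to \E^{n+k}(X,Y_0)$ pinches $Y_0$ out since $Y_0\in\X^\perp$, giving an isomorphism $\E^{n+k-1}(X,K_1)\cong \E^{n+k}(X,C)$; iterating through the remaining triangles shifts indices down one at a time to land at $\E^k(X,Y_n)=0$.

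For (2)$\Rightarrow$(3), the argument runs in the other direction on the same kind of decomposition. Given the $\E$-triangle sequence from the hypothesis, set $K_0=Y_n$ and decompose into $\E$-triangles $K_{i+1}\to Y_i\to K_i\dashrightarrow$; only $Y_0,\ldots,Y_{n-1}$ are assumed to lie in $\X^\perp$. For any $X\in\X$ and $j\geq 1$ I would use the long exact sequences to obtain isomorphisms $\E^j(X,Y_n)\cong \E^{j+1}(X,K_{n-1})\cong\cdots\cong \E^{j+n}(X,C)$, each step killing the contributions from $Y_{n-1},Y_{n-2},\ldots,Y_0$. The right-hand side vanishes by (2), whence $Y_n\in\X^\perp$.

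For (3)$\Rightarrow$(1), I would invoke the standing assumption that $\C$ has enough injectives to construct an injective coresolution inductively: pick an inflation $C\to I_0$ with $I_0\in\I(\C)$ and cocone $C_1$, then an inflation $C_1\to I_1$ with cocone $C_2$, and so on through $C_{n-1}\to I_{n-1}$ with cocone $K_n$. Splicing these $\E$-triangles yields an $\E$-triangle sequence $0\to C\to I_0\to I_1\to\cdots\to I_{n-1}\to K_n\to 0$ with $I_0,\ldots,I_{n-1}\in\I(\C)\subseteq \X^\perp$, so the hypothesis (3) forces $K_n\in\X^\perp$ and produces an $\X^\perp$-coresolution of length $n$.

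The only place requiring real care is bookkeeping in the dimension-shifting steps, particularly checking that the shifts in (1)$\Rightarrow$(2) and (2)$\Rightarrow$(3) remain in the range $\E^{\geq 1}$ where the $\X^\perp$ vanishing applies (this is automatic because $k\geq 1$ and $n\geq 0$ and indices only shift away from zero); the rest is essentially structural, reducing to the definition of an $\E$-triangle sequence and to enough-injectives.
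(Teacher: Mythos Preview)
Your proof is correct and follows the paper's approach exactly: dimension shifting along the $\E$-triangle decomposition handles (1)$\Rightarrow$(2) and (2)$\Rightarrow$(3), and your explicit construction via enough injectives for (3)$\Rightarrow$(1) is precisely what the paper records as ``obvious''. (One bookkeeping slip: the description ``$K_{i+1}\to Y_i\to K_i$ with $K_0=Y_n$ and $C=K_{n+1}$'' is inconsistent with your own correct first triangle $C\to Y_0\to K_1$; the intended decomposition is $K_i\to Y_i\to K_{i+1}$ with $K_0=C$ and $K_n\cong Y_n$, which is what you actually use.)
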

\begin{proof}
$(1)\Rightarrow(2)$
Suppose $\X^\perp\text{-}\cores C= n$, there exists an $\E$-triangle
$$\xymatrix@C=15pt
{0\ar[r]&C\ar[r]&Y_{0}\ar[r]&\cdots\ar[r]&Y_{n-1}\ar[r]&Y_{n}\ar[r]&0}$$
with all $Y_{i}\in \X^\perp$.
For any $X\in\X$, one can obtain the assertion that $\E^{n+k}(X,C)\cong\E^{k}(X,Y_{n})=0$ for any $k\geq 1$.

$(2)\Rightarrow(3)$ It is easy to check that $\E^{k}(X,Y_{n})\cong\E^{n+k}(X,C)=0$ for any $X\in \X$ and $k\geq 1$, so $Y_{n}\in\X^\perp$.

$(3)\Rightarrow(1)$ It is obvious.
\end{proof}
If $\C=\Mod A$ is the module category consisting of all right $A$-modules for a ring $A$, $\X$ denotes the subcategory consisting of all flat $A$-modules, then any $A$-module in $\X^\perp$ is called cotorsion in sense of \cite{MD06}, in this case, for any $A$-module $C$, $\X^\perp\text{-}\cores C$ coincides to the cotorsion dimension $\cotd C$ by Lemma \ref{lem-eq}, and $\X^\perp\text{-}\cores \C$ coincides to $\cotd \Mod A$ (in \cite{MD06}, it is called the right global cotorsion dimension).
Following \cite[Corollary 19.2.9]{MD06}, we know that $\cotd \Mod A=0$ if and only if $A$ is right perfect.

The following result is useful in the sequel.
\begin{lemma}\label{lem-3term}
Let $\X$ be a subcategory of $\C$ and
$$\xymatrix{A\ar[r]&B\ar[r]&C\ar[r]&D}$$
be an $\mathbb{E}$-triangle sequence in $\mathcal{C}$. Then
\begin{itemize}
\item[\rm (1)] If $D=0$, then
\begin{itemize}
\item[\rm (a)] $\X^\perp\text{-}\cores B\leq\max \{\X^\perp\text{-}\cores A,\X^\perp\text{-}\cores C\}$.
\item[\rm (b)] $\X^\perp\text{-}\cores C\leq\max\{\X^\perp\text{-}\cores A-1, \X^\perp\text{-}\cores B\}$.
\item[\rm (c)] $\X^\perp\text{-}\cores A\leq\max\{\X^\perp\text{-}\cores B, \X^\perp\text{-}\cores C+1\}$.
\end{itemize}
\item[\rm (2)] If $D\neq 0$, then
$$\X^\perp\text{-}\cores B\leq\max\{\X^\perp\text{-}\cores A, \X^\perp\text{-}\cores C, \X^\perp\text{-}\cores D+1\}.$$
\end{itemize}
\end{lemma}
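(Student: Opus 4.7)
The plan is to pass to the equivalent characterization given by Lemma~\ref{lem-eq}: $\X^\perp\text{-}\cores M\leq n$ if and only if $\E^{i}(X,M)=0$ for every $X\in\X$ and every $i\geq n+1$. Under this translation, the dimension-shifting long exact sequence associated to any $\mathbb{E}$-triangle $A\to B\to C\dashrightarrow$, namely
\[
\cdots\to\E^{k-1}(X,C)\to\E^{k}(X,A)\to\E^{k}(X,B)\to\E^{k}(X,C)\to\E^{k+1}(X,A)\to\cdots,
\]
converts every bound appearing in the lemma into a routine index comparison.

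For part (1), the assumption $D=0$ collapses the sequence $A\to B\to C\to D$ into a single $\mathbb{E}$-triangle. Write $m_A,m_B,m_C$ for the three $\X^\perp$-coresolution dimensions. For (a), set $m=\max\{m_A,m_C\}$; for $k\geq m+1$ the flanking groups $\E^{k}(X,A)$ and $\E^{k}(X,C)$ vanish, so $\E^{k}(X,B)=0$. For (b), set $m=\max\{m_A-1,m_B\}$; for $k\geq m+1$ the groups $\E^{k}(X,B)$ and $\E^{k+1}(X,A)$ vanish, squeezing $\E^{k}(X,C)$ between zeros. For (c), set $m=\max\{m_B,m_C+1\}$; for $k\geq m+1$ the groups $\E^{k-1}(X,C)$ and $\E^{k}(X,B)$ vanish, forcing $\E^{k}(X,A)=0$.

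For part (2) I would exploit the fact that, by the definition of an $\mathbb{E}$-triangle sequence, $A\to B\to C\to D$ decomposes into two $\mathbb{E}$-triangles $\xymatrix@C=12pt{A\ar[r]&B\ar[r]&K\ar@{-->}[r]&}$ and $\xymatrix@C=12pt{K\ar[r]&C\ar[r]&D\ar@{-->}[r]&}$ through an intermediate object $K$. Applying (1)(c) to the second triangle bounds $K$:
\[
\X^\perp\text{-}\cores K\leq\max\{\X^\perp\text{-}\cores C,\ \X^\perp\text{-}\cores D+1\},
\]
and then applying (1)(a) to the first triangle gives
\[
\X^\perp\text{-}\cores B\leq\max\{\X^\perp\text{-}\cores A,\ \X^\perp\text{-}\cores K\}.
\]
Substituting the first estimate into the second yields the claimed bound.

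The only real pitfall is the bookkeeping of the $\pm 1$ index shifts; in particular, the reason (1)(b) features $m_A-1$ rather than $m_A$ (and symmetrically $m_C+1$ in (1)(c)) is that the connecting map in the long exact sequence shifts the $A$-index up by one when going from $\E^{k}(X,B)$ to $\E^{k}(X,C)\to\E^{k+1}(X,A)$. Once this convention is fixed, all four inequalities are immediate, and part (2) is a formal consequence of part (1) applied twice, so I do not anticipate any deeper obstacle.
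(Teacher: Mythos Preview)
Your proposal is correct and follows exactly the paper's approach: invoke the long exact sequence together with Lemma~\ref{lem-eq} for part~(1), and deduce part~(2) by splitting the four-term $\mathbb{E}$-triangle sequence into two $\mathbb{E}$-triangles and applying part~(1) twice. The paper's own proof says only that (1) ``follows from Lemma~\ref{lem-eq} by a similar argument in classical homological algebra'' and that (2) ``follows from (1)'', so your version is simply a more explicit rendering of the same argument.
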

\begin{proof}
(1) For any $X\in \X$ and any $i\geq 1$, we have the following exact sequence
\begin{align*}
\xymatrix@C=15pt{\cdots \ar[r]&\E^{i}(X,A)\ar[r]&\E^{i}(X,B)\ar[r]&\E^{i}(X,C)\ar[r]&\E^{i+1}(X,A)\ar[r]&\E^{i+1}(X,B)\ar[r]&\E^{i+1}(X,C)\ar[r]&\cdots}
\end{align*}
By a similar argument in classical homological algebra, the assertions follow from Lemma \ref{lem-eq}.

(2) It follows from (1).
\end{proof}

\begin{lemma}\label{lem-j_{!}}
Let $(\mathcal{A},\mathcal{B},\mathcal{C})$ be a recollement of extriangulated categories and $\mathcal{X}$ and $\mathcal{X''}$ be subcategories of $\mathcal{B}$ and
$\mathcal{C}$ respectively.
Then we have
$$\X^\perp\text{-} \cores j_{*}(C) \leq  \X''^\perp\text{-}\cores C+ \max\{\X^\perp\text{-}\cores i_{*}(\A), \X^\perp\text{-}\cores j_{*}(\X''^\perp) \}+ 1.$$
\end{lemma}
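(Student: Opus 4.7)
My plan is to induct on $n := \X''^\perp\text{-}\cores C$, aiming for the bound $n + M + 1$ where $M := \max\{\X^\perp\text{-}\cores i_*(\A),\, \X^\perp\text{-}\cores j_*(\X''^\perp)\}$. If $n = \infty$ the inequality is vacuous, so I may assume $n$ is finite.

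The base case $n = 0$ is immediate: $C \in \X''^\perp$ gives $j_*(C) \in j_*(\X''^\perp)$, hence $\X^\perp\text{-}\cores j_*(C) \leq M \leq M + 1$. For $n \geq 1$, I pick an $\mathbb{E}$-triangle $C \to Y_0 \to C' \dashrightarrow$ in $\mathcal{C}$ with $Y_0 \in \X''^\perp$ and $\X''^\perp\text{-}\cores C' \leq n - 1$.

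The crux is to analyse what $j_*$ does to this $\mathbb{E}$-triangle. Since $j_*$ is left exact between extriangulated categories, the output is a left exact $\mathbb{E}$-triangle sequence in $\mathcal{B}$, which decomposes as a pair of $\mathbb{E}$-triangles
\begin{equation*}
j_*(C) \to j_*(Y_0) \to N \dashrightarrow \qquad \text{and} \qquad N \to j_*(C') \to W \dashrightarrow
\end{equation*}
for some $N, W \in \mathcal{B}$. Applying the exact functor $j^*$ and using $j^* j_* \cong \Id_{\mathcal{C}}$ from Lemma \ref{lem-rec}(1), the first $\mathbb{E}$-triangle reproduces the original $C \to Y_0 \to C' \dashrightarrow$, identifying $j^*(N) \cong C'$ compatibly with the structural morphisms. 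The second $\mathbb{E}$-triangle then becomes $C' \xrightarrow{\sim} C' \to j^*(W) \dashrightarrow$ with an isomorphism in the middle position, forcing $j^*(W) = 0$. Condition (R2) now places $W \cong i_*(A)$ for some $A \in \mathcal{A}$, so $\X^\perp\text{-}\cores W \leq \X^\perp\text{-}\cores i_*(\A) \leq M$.

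With both $\mathbb{E}$-triangles in hand, I apply Lemma \ref{lem-3term}(1)(c) twice. The second $\mathbb{E}$-triangle yields $\X^\perp\text{-}\cores N \leq \max\{\X^\perp\text{-}\cores j_*(C'),\, \X^\perp\text{-}\cores W + 1\}$; combining the induction hypothesis $\X^\perp\text{-}\cores j_*(C') \leq (n-1) + M + 1 = n + M$ with $\X^\perp\text{-}\cores W + 1 \leq M + 1$ (and using $n \geq 1$), I get $\X^\perp\text{-}\cores N \leq n + M$. The first $\mathbb{E}$-triangle then gives $\X^\perp\text{-}\cores j_*(C) \leq \max\{\X^\perp\text{-}\cores j_*(Y_0),\, \X^\perp\text{-}\cores N + 1\} \leq \max\{M, n+M+1\} = n + M + 1$, closing the induction. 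I expect the hard step to be the decomposition in the third paragraph: translating left exactness of $j_*$ into a pair of genuine $\mathbb{E}$-triangles and confining the error term $W$ inside $i_*(\A)$, which crucially combines $\Im i_* = \Ker j^*$ with the isomorphism $j^* j_* \cong \Id$.
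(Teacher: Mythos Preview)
Your approach mirrors the paper's: induct on $n$, use left exactness of $j_*$ to produce an $\mathbb{E}$-triangle $j_*(C)\to j_*(Y_0)\to N\dashrightarrow$ together with a compatible inflation $N\to j_*(C')$, complete the latter to a second $\mathbb{E}$-triangle with cone $W$, show $j^*(W)=0$ so that $W\in i_*(\A)$ by (R2), and then apply Lemma~\ref{lem-3term} twice. The one step that needs more justification is your identification $j^*(N)\cong C'$. It does not follow merely from $j^*j_*\cong\Id_{\C}$: applying $j^*$ to the first $\mathbb{E}$-triangle yields an $\mathbb{E}$-triangle $C\xrightarrow{f}Y_0\to j^*(N)\dashrightarrow$ with the \emph{same inflation} $f$ as the original $C\xrightarrow{f}Y_0\xrightarrow{g}C'\dashrightarrow$, but two $\mathbb{E}$-triangles sharing an inflation need not have isomorphic third terms. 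The paper closes this gap by examining the comparison morphism $j^*(h_1)\colon j^*(N)\to j^*j_*(C')\cong C'$ directly: it is an inflation and compatible because $h_1$ is and $j^*$ is exact, and it is a deflation by Condition~\ref{WIC} since $g=j^*(h_1)\circ j^*(h_2)$ is a deflation; hence $j^*(h_1)$ is an isomorphism by the definition of compatibility. Once this is in place, your remaining steps go through verbatim.

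A cosmetic difference: the paper takes the first term of the coresolution to be $I_0\in\I(\C)$, so that $j_*(I_0)\in\I(\B)\subseteq\X^\perp$ by Lemma~\ref{lem-rec}(3$'$); you instead take an arbitrary $Y_0\in\X''^\perp$ and bound $\X^\perp\text{-}\cores j_*(Y_0)$ by $M$. Both choices lead to the same bound.
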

\begin{proof}
If $\X''^\perp\text{-}\cores C=\infty$ or $\max\{\X^\perp\text{-}\cores i_{*}(\mathcal{A}), \X^\perp\text{-}\cores j_{*}(\X''^\perp)\}=\infty$, there is nothing to prove.
Assume that
$\max\{\X^\perp\text{-}\cores i_{*}(\mathcal{A}), \X^\perp\text{-}\cores j_{*}(\X''^\perp)\}=m$.
The proof will be proceed by induction on the $\X''^\perp\text{-}\cores C$.
If $C\in \X''^\perp$, the assertion holds obviously.
Now suppose that $\X''^\perp\text{-}\cores C=n\geq 1$.
By Lemma \ref{lem-eq}, we have the following $\mathbb{E}$-triangle sequence

$$\xymatrix@C=15pt{0\ar[rr]&&C\ar[rr]^{f}&&I_{0}\ar[rd]^{g}\ar[rr]&&I_{1}\ar[rr]&&\cdots\ar[rr]&&I_{n-1}\ar[rr]&&
Y_{n}\ar[rr]&&0\\
&&&&&K_{1}\ar[ru]&&&}$$
with all $I_{i}\in \mathcal{I(C)}\subseteq \X''^\perp$ and $Y_{n}\in \X''^\perp$.

Notice that $\X''^\perp\text{-}\cores K_{1}\leq n-1$, by induction hypothesis, we have
$$\X''^\perp\text{-}\cores j_{*}(K_{1})\leq \X''^\perp\text{-}\cores K_{1}+m+1\leq n-1+m+1=m+n.$$
Since $j_{*}$ is left exact, there is an $\mathbb{E}$-triangle $\xymatrix{j_{*}(C)\ar[r]^{j_*(f)}&j_{*}(I_{0})\ar[r]^{h_2}&K_{1}'\ar@{-->}[r]&}$
in $\mathcal{B}$ and an inflation $\xymatrix{h_{1}: K'_{1}\ar[r]& j_{*}(K_{1})}$ which is compatible, such that $j_{*}(g)=h_{1}h_{2}$.

Since $j^{*}j_{*}\cong {\rm Id}_{\mathcal{C}}$, by Lemma \ref{lem-rec},
$$\xymatrix{j^*j_{*}(C)\ar[r]^{j^*j_*(f)}&j^*j_{*}(I_{0})\ar[r]^{j^*(h_2)}&j^{*}(K_{1}')\ar@{-->}[r]&}$$
is an $\mathbb{E}$-triangle in $\mathcal{C}$.
Since $g=j^{*}j_{*}(g)=(j^{*}(h_{1}))(j^{*}(h_{2}))$, so $j^{*}(h_{1})$ is a deflation by Condition \ref{WIC}.
Note that $j^{*}(h_{1})$ is an inflation and compatible since $j^{*}$ is exact, we have that $j^{*}(h_{1})$ is an isomorphism.
Thus  $ j^{*}(K'_{1})\cong j^{*}j_{*}(K_{1})$.
Set $K''_{1}=\cone (h_{1})$, consider the following $\mathbb{E}$-triangle
\begin{align}\label{E-triangle-1}
\xymatrix@C=20pt{K'_{1}\ar[r]^{h_1}&j_{*}(K_{1})\ar[r]&K''_{1}\ar@{-->}[r]&}
\end{align}
in $\mathcal{B}$.
Since $j^{*}$ is exact,  $j^{*}(K''_{1})=0$. By (R2), there exists an object $A'\in \mathcal{A}$ such that $K''_{1}\cong i_{*}(A')$.
Then $\X^\perp\text{-}\cores K''_{1}\leq m$ by assumption.
Apply Lemma \ref{lem-3term} to
% the $\mathbb{E}$-triangle
(\ref{E-triangle-1}),
we have $\X^\perp\text{-}\cores K'_{1}\leq m+n$.
Notice that $j_{*}$ preserves injectives by Lemma \ref{lem-rec}, so $j_{*}(I_{0})\in \mathcal{I(B)}(\subseteq \X^\perp)$.
It follows that $\X^\perp\text{-}\cores  j_{*}(C)\leq m+n+1$.
\end{proof}

Now we state and prove our main theorem in this section.

\begin{theorem}\label{main-dim}
Let $(\mathcal{A},\mathcal{B},\mathcal{C})$ be a recollement of extriangulated categories and $\X'$, $\X$ and $\X''$ be subcategories
of $\mathcal{A},\ \mathcal{B}$ and $\mathcal{C}$, respectively. Then we have the following statements hold.
\begin{itemize}
\item[\rm (1)] $\X^\perp\text{-}\cores \B \leq  \X''^\perp\text{-}\cores \C+\max\{\X^\perp\text{-}\cores  i_{*}(\A), \X^\perp\text{-}\cores  j_{*}(\X''^\perp)\} + 1.$
\item[\rm (2)] $\X^\perp\text{-}\cores i_{*}(\A)\leq \X'^\perp\text{-}\cores \A+\X^\perp\text{-}\cores i_{*}(\X'^\perp)$.
\item[\rm (3)]
 $\X^\perp\text{-}\cores \B \leq$\\
 \hfill $\X'^\perp\text{-}\cores \A+ \X''^\perp\text{-}\cores \mathcal{C}+\max\{\X^\perp\text{-}\cores i_{*}(\X'^\perp), \X^\perp\text{-}\cores  j_{*}(\X''^\perp)\} + 1.$
\item[\rm (4)] $\X''^\perp\text{-}\cores \C\leq \X^\perp\text{-}\cores \B+ \X''^\perp\text{-}\cores  j^{*}(\X^\perp)$.
\item[\rm (5)] if $j_{*}(\X''^\perp)\subseteq \X^\perp$ and $i_{*}(\X'^\perp)\subseteq\X^\perp$, then
$$\X^\perp\text{-}\cores \B \leq \X'^\perp\text{-}\cores \A+ \X''^\perp\text{-}\cores \mathcal{C} + 1.$$
\item[\rm (6)] if $j^{*}(\X^\perp)\subseteq \X''^\perp$, then $\X''^\perp\text{-}\cores \C\leq \X^\perp\text{-}\cores \B$.

   \item[\rm (7)] if $i^!$ is exact and $i^{!}(\X^\perp)\subseteq\X'^\perp$, then $\X'^\perp\text{-}\cores \A\leq \X^\perp\text{-}\cores \B$.

\item[\rm (8)] if $i^!$ is exact, and if $j_{*}(\X''^\perp)\subseteq \X^\perp$, $j^{*}(\X^\perp)\subseteq \X''^\perp$, $i_{*}(\X'^\perp)\subseteq\X^\perp$ and $i^{!}(\X^\perp)\subseteq\X'^\perp$, then
$$\hspace{-6mm}\max\{\X'^\perp\text{-}\cores \A,\X''^\perp\text{-}\cores \mathcal{C}\}\leq \X^\perp\text{-}\cores \B \leq \X'^\perp\text{-}\cores \A+ \X''^\perp\text{-}\cores \mathcal{C} + 1.$$
\end{itemize}
\end{theorem}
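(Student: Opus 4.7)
The plan is to prove the four genuine bounds (1)--(4) first, then obtain (5)--(8) as immediate specializations or combinations. For (1), I start with the left exact $\E$-triangle sequence $i_*i^!(B)\to B\to j_*j^*(B)\to i_*(A)$ from (R4) and apply Lemma \ref{lem-3term}(2) to bound $\X^\perp\text{-}\cores B$ by the maximum of $\X^\perp\text{-}\cores i_*i^!(B)$, $\X^\perp\text{-}\cores j_*j^*(B)$, and $\X^\perp\text{-}\cores i_*(A)+1$. The first and third terms are bounded by $\X^\perp\text{-}\cores i_*(\A)$, and the middle one is exactly what Lemma \ref{lem-j_{!}} bounds; stringing these together yields (1). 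Part (3) follows by combining (1) and (2), so the remaining genuine work is (2) and (4).

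For (2), given $A\in\A$ with $\X'^\perp\text{-}\cores A=n$, pick an $\E$-triangle sequence $0\to A\to X_0\to\cdots\to X_n\to 0$ with $X_k\in\X'^\perp$, apply the exact functor $i_*$, decompose the resulting sequence in $\B$ into its constituent $\E$-triangles $K_{k-1}\to i_*X_{k-1}\to K_k\dashrightarrow$, and induct on $n$ using Lemma \ref{lem-3term}(1)(c), using at each step that $\X^\perp\text{-}\cores i_*X_k\leq\X^\perp\text{-}\cores i_*(\X'^\perp)$. Part (4) is the parallel argument on the other side: by density of $j^*$ (Lemma \ref{lem-rec}(1)) write $C\cong j^*(B)$ with $B\in\B$, take an $\E$-triangle sequence $0\to B\to Y_0\to\cdots\to Y_n\to 0$ realizing $n=\X^\perp\text{-}\cores B$, apply the exact functor $j^*$, and run the same induction with $\X''^\perp\text{-}\cores j^*(Y_k)\leq\X''^\perp\text{-}\cores j^*(\X^\perp)$.

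For the remaining four items, the hypotheses just kill various error terms in (3) and (4). Under $i_*(\X'^\perp)\subseteq\X^\perp$ and $j_*(\X''^\perp)\subseteq\X^\perp$ we get $\X^\perp\text{-}\cores i_*(\X'^\perp)=0=\X^\perp\text{-}\cores j_*(\X''^\perp)$, so (5) drops out of (3); similarly (6) drops out of (4) once $\X''^\perp\text{-}\cores j^*(\X^\perp)=0$. For (7), use $i^!i_*\cong\Id_\A$ (Lemma \ref{lem-rec}(1)) and exactness of $i^!$: apply $i^!$ to an $\E$-triangle sequence realizing $\X^\perp\text{-}\cores i_*(A)$ in $\B$, producing an $\E$-triangle sequence of the same length for $A$ whose terms lie in $i^!(\X^\perp)\subseteq\X'^\perp$, which gives $\X'^\perp\text{-}\cores A\leq\X^\perp\text{-}\cores i_*(A)\leq\X^\perp\text{-}\cores\B$. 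Finally (8) is simply the conjunction of (5), (6) and (7).

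The main technical obstacle lives upstream in Lemma \ref{lem-j_{!}} rather than inside the theorem itself, since that lemma is what handles the $j_*j^*(B)$ term by building an auxiliary $\E$-triangle $K'_1\to j_*(K_1)\to K''_1\dashrightarrow$ with $K''_1\in i_*(\A)$ and exploiting Condition \ref{WIC} to recognize $j^*(h_1)$ as an isomorphism. Once that lemma is invoked, all eight parts reduce to uniform bookkeeping: repeated application of Lemma \ref{lem-3term} to the canonical $\E$-triangle sequences furnished by the recollement, together with the exactness and density properties catalogued in Lemma \ref{lem-rec}.
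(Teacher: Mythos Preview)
Your proposal is correct and follows essentially the same route as the paper: both use (R4) with Lemma~\ref{lem-3term}(2) and Lemma~\ref{lem-j_{!}} for (1), push $\E$-triangle sequences through the exact functors $i_*$, $j^*$, $i^!$ for (2), (4), (7), and read off (3), (5), (6), (8) as combinations and specializations. The only cosmetic difference is in (4): the paper picks the canonical preimage $B=j_!(C)$ and invokes $j^*j_!\cong\Id_\C$, whereas you appeal to density of $j^*$ to choose an arbitrary $B$ with $j^*(B)\cong C$; both produce the same bound.
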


\begin{proof}

(1)
Suppose that
$$\max\{\X^\perp\text{-}\cores i_{*}(\A),\X^\perp\text{-}\cores j_{*}(\X''^\perp)\}=m < \infty~~\mbox{and}~~\X''^\perp\text{-}\cores \C= n <\infty.$$
Let $B\in \B$.
By (R4),
there exists a commutative diagram
\begin{equation*}
\xymatrix{
  &i_{*}i^{!}(B)\ar[rr]^-{\theta_B}&  &B\ar[dr]_{h_{2}}\ar[rr]^-{\nu_B}&&j_{*}j^{*}(B) \ar[r]&i_{*}(A') \\
           &            &&    &         B' \ar[ur]_{h_{1}}& }
\end{equation*}
in $\mathcal{B}$ such that $i_{*}i^{!}(B){\longrightarrow}B\stackrel{h_{2}}{\longrightarrow}B'\stackrel{}\dashrightarrow$ and $B'\stackrel{h_{1}}{\longrightarrow}j_{*}j^{*}(B) {\longrightarrow}i_{*}(A')\stackrel{}\dashrightarrow$ are $\mathbb{E}$-triangles and $h_{1}$ is compatible.
Notice that
$\X^\perp\text{-}\cores i_{*}(A')\leq m$ and $\X^\perp\text{-}\cores i_{*}i^{!}(B)\leq m$.
By Lemmas \ref{lem-3term} and \ref{lem-j_{!}},
\begin{align*}
\X^\perp\text{-}\cores B&\leq \max\{\X^\perp\text{-}\cores i_{*}i^{!}(B), \X^\perp\text{-}\cores j_{*}j^{*}(B),\X^\perp\text{-}\cores i_{*}(A')+1\}\\
&\leq \max\{\X''^\perp\text{-}\cores j^{*}(B)+m+1,m+1,m\}
\end{align*}
Note that $ \X''^{\perp}\text{-}\cores j^{*}(B)\leq n$, so $\X^\perp\text{-}\cores B\leq m+n+1$.

(2)
 Suppose that $\X^\perp\text{-}\cores i_{*}(\X'^\perp)=n<\infty$ and $\X'^\perp\text{-}\cores \A=m< \infty$.
Let $A\in \mathcal{A}$.
If $A\in \X'^\perp$, then $\X^\perp\text{-}\cores  i_{*}(A)\leq n$ and our result holds.
Now suppose that $\X'^\perp\text{-}\cores A=s\leq m$.
Consider the following $\mathbb{E}$-triangle sequence
$$\xymatrix{A\ar[r]&X'_{0}\ar[r]&\cdots\ar[r]&X'_{s-1}\ar[r]&X'_{s}}$$
in $\mathcal{A}$ with $X'_{i}\in \X'^\perp$ for $0\leq i\leq s$.
Since $i_{*}$ is exact,
$$\xymatrix{i_{*}(A)\ar[r]&i_{*}(X'_{0})\ar[r]&\cdots\ar[r]&i_{*}(X'_{s-1})\ar[r]&i_{*}(X'_{s})}$$
is an $\mathbb{E}$-triangle sequence in $\mathcal{B}$.
By assumption, we know
$\X^\perp\text{-}\cores  i_{*}(X'_{i})\leq n$.
Then
$$\X^\perp\text{-}\cores  i_{*}(A)\leq s+n \leq m+n$$
by Lemma \ref{lem-3term}.

(3) It follows from (1) and (2).

(4)
Suppose that $\X^\perp\text{-}\cores \mathcal{B}= m<\infty$ and $\X''^\perp\text{-}\cores j^{*}(\X^\perp)=n<\infty$.
Let $C\in \mathcal{C}$. We know $j_{!}(C)\in \mathcal{B}$.
Assume that $\X^\perp\text{-}\cores j_!(C)=s\leq m$,
and consider the following $\mathbb{E}$-triangle sequence
$$\xymatrix{j_{!}(C)\ar[r]&X_{0}\ar[r]&X_{1}\ar[r]&\cdots\ar[r]&X_{s-1}\ar[r]&X_{s}}$$
in $\B$ with $X_{i}\in \X^\perp$ for $0\leq i\leq s$.

Since $j^{*}$ is exact and $j^*j_!\cong \Id_\C$,
$$\xymatrix{C\ar[r]&j^{*}(X_{0})\ar[r]&j^{*}(X_{1})\ar[r]&\cdots\ar[r]&j^{*}(X_{s-1})\ar[r]&j^{*}(X_{s})}$$
is an $\mathbb{E}$-triangle sequence in $\mathcal{C}$.
Notice that $\X''^\perp\text{-}\res j^{*}(X_{i})\leq n$ by assumption,
so $\X''^\perp\text{-}\res C\leq s+n \leq m+n$ by Lemma \ref{lem-3term}.

(5) It follows from (3).

(6) It follows from (4).

(7)
For any object $A\in\A$, $i_{*}(A)\in \B$.
Suppose $\X^\perp\text{-}\cores B=n$, so there exists an $\E$--triangle sequence
$$\xymatrix{i_{*}(A)\ar[r]&X_{0}\ar[r]&X_{1}\ar[r]&\cdots\ar[r]&X_{s-1}\ar[r]&X_{s}}$$
in $\B$ with $X_{i}\in \X^\perp$ for $0\leq i\leq n$.
The exactness of $i^{!}$ yields the following $\E$--triangle sequence
$$\xymatrix{A(\cong i^!i_{*}(A)) \ar[r]&i^!(X_{0})\ar[r]&i^!(X_{1})\ar[r]&\cdots\ar[r]&i^!(X_{s-1})\ar[r]&i^!(X_{s})}.$$
Notice that $i^{!}(\X^\perp)\subseteq\X'^\perp$ by assumption, so $\X'^\perp\text{-}\cores A\leq n$, and thus $\X'^\perp\text{-}\cores A \leq \X^\perp\text{-}\cores \B$.

(8) It follows from (5)(6)(7).
\end{proof}

Taking $\X'=\A$, $\X=\B$ and $\X''=\C$, applying Theorem \ref{main-dim} to abelian categories yields a dual of \cite[Theorem 4.1 and Proposition 4.4]{PC14H}.
\begin{corollary}
Let $(\mathcal{A},\mathcal{B},\mathcal{C})$ be a recollement of abelian categories. Then the following statements hold.
\begin{itemize}
\item[\rm (1)] $\I(\B)\text{-}\cores \B \leq  \I(\C)\text{-}\cores \C+\I(\B)\text{-}\cores  i_{*}(\A) + 1.$
\item[\rm (2)] $\I(\B)\text{-}\cores i_{*}(\A)\leq \I(\A)\text{-}\cores \A+\I(\B)\text{-}\cores i_{*}(\I(\A))$.
\item[\rm (3)] $\I(\B)\text{-}\cores \B \leq \I(\A)\text{-}\cores \A+ \I(\C)\text{-}\cores \mathcal{C}+\I(\B)\text{-}\cores i_{*}(\I(\A)) + 1.$
\item[\rm (4)] $\I(\C)\text{-}\cores \C\leq \I(\B)\text{-}\cores \B+ \I(\C)\text{-}\cores  j^{*}(\I(\B))$.
\item[\rm (5)] if $i_{*}(\I(\A))\subseteq\I(\B)$, then
$\I(\B)\text{-}\cores \B \leq \I(\A)\text{-}\cores \A+ \I(\C)\text{-}\cores \mathcal{C} + 1.$
\item[\rm (6)] if $j^{*}(\I(\B))\subseteq \I(\C)$, then $\I(\C)\text{-}\cores \C\leq \I(\B)\text{-}\cores \B$.
 \item[\rm (7)] if $i^!$ is exact, then $\I(\A)\text{-}\cores \A\leq \I(\B)\text{-}\cores \B$.
 \item[\rm (8)] if $i^!$ is exact, and if $j^{*}(\I(\B))\subseteq \I(\C)$, $i_{*}(\I(\A))\subseteq\I(\B)$, then
$$\hspace{-8mm}\max\{\I(\A)\text{-}\cores \A,\I(\C)\text{-}\cores \mathcal{C}\}\leq \I(\B)\text{-}\cores \B \leq \I(\A)\text{-}\cores \A+ \I(\C)\text{-}\cores \mathcal{C} + 1.$$
\end{itemize}
\end{corollary}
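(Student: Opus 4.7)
The plan is to derive the corollary as a direct specialization of Theorem \ref{main-dim} to the choices $\X' = \A$, $\X = \B$, $\X'' = \C$. The key observation is the remark made just before Lemma \ref{lem-eq}: taking $\X$ to be the ambient category yields $\X^\perp = \I(\C)$ in $\C$, and similarly $\X'^\perp = \I(\A)$ when $\X'=\A$ and $\X''^\perp = \I(\C)$ when $\X''=\C$. Substituting these identifications into the eight parts of Theorem \ref{main-dim} produces statements very close to the eight parts of the corollary, and what remains is to use Lemma \ref{lem-rec} to eliminate a few hypotheses that become automatic in this setting.

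Concretely, items (2), (4) and (6) are immediate substitutions. For item (1), Lemma \ref{lem-rec}(3') says that $j_*$ preserves injective objects, so $j_*(\I(\C)) \subseteq \I(\B)$ and therefore $\I(\B)\text{-}\cores j_*(\I(\C)) = 0$; the maximum in Theorem \ref{main-dim}(1) then collapses to $\I(\B)\text{-}\cores i_*(\A)$. Item (3) follows from (1) and (2), exactly as in the theorem. For item (5), the hypothesis $j_*(\X''^\perp) \subseteq \X^\perp$ in Theorem \ref{main-dim}(5) is again automatic by Lemma \ref{lem-rec}(3'), so only $i_*(\I(\A)) \subseteq \I(\B)$ has to be retained. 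For item (7), Lemma \ref{lem-rec}(3) states that $i^!$ always preserves injective objects, so the assumption $i^!(\X^\perp) \subseteq \X'^\perp$ of Theorem \ref{main-dim}(7) is automatic, leaving exactness of $i^!$ as the only hypothesis. Finally, item (8) is obtained by combining (5), (6) and (7).

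There is no substantive obstacle to this proof: it is essentially a bookkeeping exercise, tracing through each of the eight items of Theorem \ref{main-dim} and noting which of its hypotheses collapse to automatic facts via parts (3) and (3') of Lemma \ref{lem-rec}, and which must still be retained as assumptions in the corresponding part of the corollary. The mildest care is required for items (1) and (5), where the preservation of injectives under $j_*$ is used first to absorb a term inside a maximum and then to remove a hypothesis entirely.
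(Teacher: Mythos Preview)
Your proposal is correct and follows exactly the same approach as the paper: the paper's proof is the one-line remark that the assertions follow from Theorem \ref{main-dim} together with the fact that $i^{!}$ and $j_{*}$ preserve injective objects (Lemma \ref{lem-rec}(3),(3')), and your write-up is simply a detailed unpacking of that sentence.
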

\begin{proof}
The assertions follow from Theorem \ref{main-dim} and the fact that the functors $i^{!}$ and $j_*$ preserve injective objects by Lemma \ref{lem-rec}.
\end{proof}

By applying Theorem \ref{main-dim} to module categories, we have the following result.

\begin{corollary}
{\rm (cf. \cite[Theorem 2.7 and Corollary 3.5]{FH22T})}
Let $(\Mod A,\Mod B,\Mod C)$ be a recollement of module categories for rings $A$, $B$ and $C$,
$$
  \xymatrix{\Mod A\ar[rr]|{i_{*}}&&\ar@/_1pc/[ll]|{i^{*}}\ar@/^1pc/[ll]|{i^{!}}\Mod B
\ar[rr]|{j^{\ast}}&&\ar@/_1pc/[ll]|{j_{!}}\ar@/^1pc/[ll]|{j_{\ast}}\Mod C}
$$
and let $\X'$, $\X$ and $\X''$ be subcategories consisting of all flat modules
in $\Mod A,\Mod B$ and $\Mod C$ respectively.
Then
\begin{itemize}
\item[\rm (1)] $\cotd \Mod B \leq \cotd \Mod A+ \cotd \Mod C+\max\{\cotd i_{*}(\X'^\perp), \cotd  j_{*}(\X''^\perp)\} + 1.$
\item[\rm (2)] $\cotd \Mod C\leq \cotd \Mod B+ \cotd  j^{*}(\X^\perp)$.
\item[\rm (3)] if $j^{*}(\X^\perp)\subseteq \X''^\perp$, then $\cotd \Mod C\leq \cotd \Mod B$; in particular, if $B$ is right perfect, then so is $C$.
\item[\rm (4)] if $i^!$ is exact and $i^{!}(\X^\perp)\subseteq \X'^\perp$, then $\cotd \Mod A\leq \cotd \Mod B$; in particular, if $B$ is right perfect, then so is $C$.
\end{itemize}
\end{corollary}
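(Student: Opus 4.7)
The plan is to deduce each of (1)--(4) directly as a specialization of the corresponding parts of Theorem \ref{main-dim}, once one has set up the right dictionary between the abstract $\X^\perp$-coresolution dimension and the classical cotorsion dimension of a module category. First, I would record the translation: in the module category $\Mod R$ with $\X$ the class of flat right $R$-modules, the subcategory $\X^\perp$ is precisely the class of cotorsion modules, and by Lemma \ref{lem-eq} the invariant $\X^\perp\text{-}\cores$ coincides with the cotorsion dimension $\cotd$ of an individual module, hence $\X^\perp\text{-}\cores(\Mod R) = \cotd \Mod R$ (as noted in the text following Lemma \ref{lem-eq}). Applying this observation to each of $A$, $B$, $C$ simultaneously, the quantities $\X'^\perp\text{-}\cores \A$, $\X^\perp\text{-}\cores \B$, $\X''^\perp\text{-}\cores \C$ in Theorem \ref{main-dim} become $\cotd \Mod A$, $\cotd \Mod B$, $\cotd \Mod C$ respectively.

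With the dictionary in place, the four inequalities are immediate specializations: (1) is Theorem \ref{main-dim}(3); (2) is Theorem \ref{main-dim}(4); (3) is Theorem \ref{main-dim}(6); and (4) is Theorem \ref{main-dim}(7), where the hypotheses ``$j^{*}(\X^\perp) \subseteq \X''^\perp$'' in (3) and ``$i^!$ is exact and $i^{!}(\X^\perp) \subseteq \X'^\perp$'' in (4) are precisely the conditions stated in the corollary, so no extra verification is required. Since $\Mod R$ has enough projectives and enough injectives and is weakly idempotent complete, Condition \ref{WIC} and the standing assumptions of the paper are satisfied automatically.

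For the ``in particular'' clauses, I would invoke the characterization of right perfect rings via cotorsion dimension recorded just after Lemma \ref{lem-eq}, namely $\cotd \Mod R = 0$ if and only if $R$ is right perfect (following \cite[Corollary 19.2.9]{MD06}). If $B$ is right perfect, then $\cotd \Mod B = 0$, and the inequalities in (3) and (4) force the cotorsion dimension of the target module category to be zero as well, yielding the right perfectness of the corresponding ring.

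Honestly I do not anticipate any substantial obstacle: the whole corollary is a pure specialization of Theorem \ref{main-dim}. The only delicate point is making sure one applies the correct labeled item of Theorem \ref{main-dim} to the correct item of the corollary, and that one records the identification $\cotd = \X^\perp\text{-}\cores$ (under the flat/cotorsion choice of $\X$) explicitly before invoking the theorem.
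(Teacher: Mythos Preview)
Your proposal is correct and matches the paper's approach exactly: the paper simply says ``By applying Theorem \ref{main-dim} to module categories, we have the following result'' and gives no further proof, so your explicit dictionary (flat $\leadsto$ cotorsion $\leadsto$ $\cotd$) together with the item-by-item matching (1)$\leftrightarrow$(3), (2)$\leftrightarrow$(4), (3)$\leftrightarrow$(6), (4)$\leftrightarrow$(7) and the invocation of \cite[Corollary 19.2.9]{MD06} for the ``in particular'' clauses is precisely what is intended.
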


\section{Cotorsion pairs and cotorsion triples}

Now we recall the notion of cotorsion pairs.

\begin{definition}\label{DefCotors}
Let $\X,\Y\subseteq\C$ be a pair of subcategories.
\begin{itemize}
\item[\rm (1)] {\rm (\cite[Definition 4.1]{Na})} The pair $(\X,\Y)$ is called a {\it cotorsion pair} in $\C$ if it satisfies the following conditions.
\begin{enumerate}
\item[\bf(CP1)] $\E(\X,\Y)=0$.
\item[\bf(CP2)] For any $C\in\C$, there exist two $\E$-triangles
$$\xymatrix@C=15pt{Y^1\ar[r]& X^1\ar[r]& C\ar@{-->}[r]&}$$
$$\xymatrix@C=15pt{C\ar[r]& Y_1\ar[r]& X_1\ar@{-->}[r]&}$$
satisfying $X^1, X_1 \in\X,Y^1, Y_1 \in\Y$.
\end{enumerate}
\item[\rm (2)] {\rm (\cite[Definition 3.1]{AT22H})} A cotorsion pair $(\mathcal{X}, \mathcal{Y})$ is called hereditary if it satisfies the following condition.
    \begin{itemize}
    \item[\bf(HCP)] $\mathbb{E}^i(\mathcal{X}, \mathcal{Y})=0$ for each $i \geq 2$.
    \end{itemize}
\end{itemize}
\end{definition}
If $(\X,\X)$ is a cotorsion pair in $\C$, the subcategory is called the cluster tilting subcategory of $\C$ in sense of (\cite[Remark 2.11]{ZZ19C}).
Three subcategories $\mathcal{X}, \mathcal{Y}$ and $\mathcal{Z}$ of $\C$ form a cotorsion triple $(\mathcal{X}, \mathcal{Y}, \mathcal{Z})$ if $(\mathcal{X}, \mathcal{Y})$ and $(\mathcal{Y}, \mathcal{Z})$ are cotorsion pairs in $\C$.
Moreover, a cotorsion triple $(\mathcal{X}, \mathcal{Y}, \mathcal{Z})$ in $\C$ is hereditary if $(\mathcal{X}, \mathcal{Y})$ and $(\mathcal{Y}, \mathcal{Z})$ are hereditary cotorsion pairs.

\begin{remark}
{\rm (see \cite[Remark 4.4]{Na})}\label{RemCotors}
Let $(\X,\Y)$ be a cotorsion pair in $\C$, the following statements hold for any $C\in\C$.
\begin{enumerate}
\item[\rm (1)] $C\in\X\Leftrightarrow\mathbb{E}(C,\Y)=0$.
\item[\rm (2)] $C\in\Y\Leftrightarrow\mathbb{E}(\X,C)=0$.
\end{enumerate}
 In addition, if $(\X,\Y)$ is hereditary, then $\Y=\X^{\perp}$ and $\X={^\perp\Y}$.
\end{remark}

The following results are easy and useful in the sequel.

\begin{lemma}\label{E-xt}
Let $\A$ and $\B$ be extriangulated categories, and let $F : \A \to \B$ be a functor admitting a right adjoint functor $G$.
For any $X\in \A$, $Y\in \B$ and any $i\geq 1$, if one of the following conditions is satisfied
 \begin{itemize}
 \item[\rm (1)] If $F$ is an exact functor and preserves projective objects;
 \item[\rm (2)] If $G$ is an exact functor and preserves injective objects;
 \end{itemize}
then we have
$$\mathbb{E}_{\B}^{i}(F(X),Y)\cong \mathbb{E}_{\A}^{i}(X,G(Y)).$$
\end{lemma}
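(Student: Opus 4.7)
The plan is to reduce to the base case $i=1$ by dimension shifting, and then deduce that case from the ordinary $\Hom$-adjunction combined with the long exact sequence of $\mathbb{E}$-extensions recalled at the end of Section~2. I outline case (1); case (2) is entirely dual.

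Given $X\in\A$, pick an $\mathbb{E}$-triangle $\Omega X\to P\to X\dashrightarrow$ in $\A$ with $P\in\P(\A)$. Since $F$ is exact and preserves projectives, applying $F$ yields an $\mathbb{E}$-triangle $F(\Omega X)\to F(P)\to F(X)\dashrightarrow$ in $\B$ with $F(P)\in\P(\B)$. Iterating, $F(\Omega^{k}X)$ serves as a $k$-th syzygy of $F(X)$ in $\B$ for every $k\geq 0$. Combining this with the dimension-shifting lemma recalled earlier, for $i\geq 1$ one has
$$\mathbb{E}^{i}_{\B}(F(X),Y)\cong\mathbb{E}_{\B}(F(\Omega^{i-1}X),Y)\qquad\text{and}\qquad\mathbb{E}^{i}_{\A}(X,G(Y))\cong\mathbb{E}_{\A}(\Omega^{i-1}X,G(Y)),$$
so it suffices to establish a natural isomorphism $\mathbb{E}_{\B}(F(X'),Y)\cong\mathbb{E}_{\A}(X',G(Y))$ for arbitrary $X'\in\A$.

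To achieve this, apply $\Hom_{\B}(-,Y)$ to the $\mathbb{E}$-triangle $F(\Omega X')\to F(P')\to F(X')\dashrightarrow$ and $\Hom_{\A}(-,G(Y))$ to the $\mathbb{E}$-triangle $\Omega X'\to P'\to X'\dashrightarrow$. Each produces a six-term exact sequence whose $\mathbb{E}_{\B}(F(P'),Y)$ (respectively $\mathbb{E}_{\A}(P',G(Y))$) term vanishes because $F(P')$ and $P'$ are projective. The ordinary adjunction $\Hom_{\B}(F(-),Y)\cong\Hom_{\A}(-,G(Y))$, natural in its first slot, matches the two sequences at every $\Hom$-term; a short diagram chase then identifies the two cokernels $\mathbb{E}_{\B}(F(X'),Y)$ and $\mathbb{E}_{\A}(X',G(Y))$. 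Case (2) runs symmetrically: begin with an $\mathbb{E}$-triangle $Y\to I\to\Sigma Y\dashrightarrow$ in $\B$ with $I\in\I(\B)$, apply $G$ (which is exact and injective-preserving) to obtain a cosyzygy of $G(Y)$ in $\A$, and invoke the analogous argument via $\Hom_{\B}(F(X),-)$ and $\Hom_{\A}(X,G(-))$.

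The step I expect to be most delicate is verifying naturality in the base case, so that the identification $\mathbb{E}_{\B}(F(X'),Y)\cong\mathbb{E}_{\A}(X',G(Y))$ is independent of the chosen projective presentation of $X'$ (and, dually, of the chosen injective coresolution of $Y$). This follows from the fact that syzygies are well-defined up to projective summands and that such summands contribute trivially to $\mathbb{E}(-,Y)$. Once naturality at $i=1$ is secured, induction on $i$ together with the two dimension-shifting formulas above delivers the isomorphism for all $i\geq 1$.
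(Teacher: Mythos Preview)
Your proposal is correct and follows essentially the same strategy as the paper: both reduce to $i=1$ via dimension shifting along a syzygy $\Omega X\to P\to X\dashrightarrow$ (using that $F$ is exact and preserves projectives so that $F(\Omega X)$ is a syzygy of $F(X)$), and then proceed by induction. The only difference is that the paper dispatches the base case $i=1$ by citing \cite[Lemma~2.16]{WWZ20R}, whereas you spell out that argument via the long exact $\Hom$/$\mathbb{E}$ sequence and the adjunction; your added remarks on naturality and independence of the chosen presentation are not in the paper but do no harm.
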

\begin{proof}
For any $X\in\X$,
consider the following $\E$-triangle
$$\xymatrix@C=15pt{\Omega X\ar[r]&P\ar[r]&X{\dashrightarrow}}$$
in $\A$ with $P\in\P(\A)$.
If $i=1$, the assertion follows from \cite[Lemma 2.16]{WWZ20R}.
For $i>1$, and for any $Y\in \Y$, it is easy to see that $\E_{\B}^i(F(X), Y) \cong \E_{\B}^{i-1}(F(\Omega X),Y)$ and $\E_{\A}^i(X, G(Y)) \cong \E_{\A}^{i-1}(\Omega X, G(Y))$.
By induction hypothesis, we have
$$\E_{\B}^i(F(X), Y) \cong \E_{\A}^i(X, G(Y)).$$
\end{proof}

\begin{lemma}\label{lem-=}
Let $(\A, \B, \C)$ be a recollement of extriangulated categories, and
let $\X$ be a subcategory of $\mathcal{B}$. If $i_{*}i^{*}(\X) \subseteq \X$ and $i_{*}i^{!}(\X) \subseteq \X$, then $i^{*}(\X) =i^{!}(\X)$.
\end{lemma}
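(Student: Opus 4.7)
The proof looks like a short chase using the natural isomorphisms $i^{*}i_{*}\cong\Id_{\A}$ and $i^{!}i_{*}\cong\Id_{\A}$ from Lemma \ref{lem-rec}(1), combined with the two closure hypotheses. The strategy is to produce, for each $X\in\X$, a witness in $\X$ whose $i^{!}$-image recovers $i^{*}(X)$, and vice versa.

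For the inclusion $i^{*}(\X)\subseteq i^{!}(\X)$, I will pick an arbitrary $X\in\X$ and set $X':=i_{*}i^{*}(X)$. By the hypothesis $i_{*}i^{*}(\X)\subseteq\X$, we have $X'\in\X$. Applying $i^{!}$ and using $i^{!}i_{*}\cong\Id_{\A}$ gives
\[
i^{!}(X')=i^{!}i_{*}i^{*}(X)\cong i^{*}(X),
\]
so $i^{*}(X)\in i^{!}(\X)$. Symmetrically, for $i^{!}(\X)\subseteq i^{*}(\X)$, set $X'':=i_{*}i^{!}(X)\in\X$ by the other hypothesis, and use $i^{*}i_{*}\cong\Id_{\A}$ to get
\[
i^{*}(X'')=i^{*}i_{*}i^{!}(X)\cong i^{!}(X),
\]
so $i^{!}(X)\in i^{*}(\X)$.

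There is no real obstacle here — the only thing to be slightly careful about is that the conclusion is an equality of subcategories of $\A$ understood up to isomorphism (which is fine since, by our standing convention from the introduction, all subcategories are closed under isomorphisms). The two natural isomorphisms from Lemma \ref{lem-rec}(1) are what make the witnesses $X'$ and $X''$ do exactly what is needed; no $\E$-triangle manipulation or dimension shifting is required for this lemma.
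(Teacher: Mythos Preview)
Your proof is correct and follows exactly the same approach as the paper: both inclusions are obtained from the natural isomorphisms $i^{*}i_{*}\cong\Id_{\A}$ and $i^{!}i_{*}\cong\Id_{\A}$ of Lemma~\ref{lem-rec}(1) together with the two closure hypotheses. The paper's version is simply more terse, stating the inclusions without writing out the explicit witnesses $X'$ and $X''$.
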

\begin{proof}
Since $i^{*}i_{*}\cong \Id_{\mathcal{A}}$ and $i_{*}i^{!}(\mathcal{X})\subseteq \mathcal{X}$, we have
$i^{!}(\mathcal{X})\subseteq i^{*}(\mathcal{X})$.
On the other hand, $i^{!}i_{*}\cong \Id_{\mathcal{A}}$ and $i_{*}i^{*}(\mathcal{X})\subseteq \mathcal{X}$ imply that
$i^{*}(\mathcal{X})\subseteq i^{!}(\mathcal{X})$. Then $i^{*}(\mathcal{X})= i^{!}(\mathcal{X})$.
\end{proof}

\begin{lemma}\label{lem-=if}
Let $(\A, \B, \C)$ be a recollement of extriangulated categories, and
let $(\X,\Y)$ be a cotorsion pair in $\B$. If $i^{*}$ and $i^!$ are exact, then we have
\begin{itemize}
\item[\rm (1)] $i_{*}i^{!}(\mathcal{Y})\subseteq \mathcal{Y}$ if and only if $i_{*}i^{*}(\mathcal{X})\subseteq \mathcal{X}$.
\item[\rm (2)] $j_{*}j^{*}(\mathcal{Y})\subseteq \mathcal{Y}$ if and only if $j_{!}j^{*}(\mathcal{X})\subseteq \mathcal{X}$.
\end{itemize}
\end{lemma}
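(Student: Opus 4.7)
The plan is to translate each subcategory inclusion into an $\mathbb{E}$-vanishing condition via Remark \ref{RemCotors}, and then use the exactness hypotheses on $i^{*}$ and $i^{!}$ together with Lemma \ref{E-xt} to shuttle the extension groups across the adjunctions. In each of (1) and (2) the two inclusions turn out to be controlled by the same bifunctor evaluated on a common pair, which yields the equivalence immediately.

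For part (1), observe that by Remark \ref{RemCotors} we have $i_{*}i^{!}(\Y)\subseteq\Y$ if and only if $\E(X,\,i_{*}i^{!}(Y))=0$ for every $X\in\X$ and $Y\in\Y$, while $i_{*}i^{*}(\X)\subseteq\X$ if and only if $\E(i_{*}i^{*}(X),\,Y)=0$ for every such pair. Applying Lemma \ref{E-xt} to the adjoint pair $(i^{*},i_{*})$ — which is legal because $i^{*}$ is exact by hypothesis and preserves projectives by Lemma \ref{lem-rec}(3) — produces $\E(X,i_{*}i^{!}Y)\cong\E(i^{*}X,i^{!}Y)$. Applying Lemma \ref{E-xt} to the adjoint pair $(i_{*},i^{!})$ — legal since $i^{!}$ is exact by hypothesis and preserves injectives by Lemma \ref{lem-rec}(3) — produces $\E(i_{*}i^{*}X,Y)\cong\E(i^{*}X,i^{!}Y)$. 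Chaining these isomorphisms gives
\[
\E(X,\,i_{*}i^{!}Y)\;\cong\;\E(i^{*}X,\,i^{!}Y)\;\cong\;\E(i_{*}i^{*}X,\,Y),
\]
so the two vanishing conditions are equivalent, proving (1).

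For part (2), argue in the same way, but with $j$-functors. The inclusions $j_{*}j^{*}(\Y)\subseteq\Y$ and $j_{!}j^{*}(\X)\subseteq\X$ are equivalent, respectively, to $\E(X,j_{*}j^{*}Y)=0$ and $\E(j_{!}j^{*}X,Y)=0$ for all $X\in\X,Y\in\Y$. Here we use that the exactness of $i^{*}$ forces $j_{!}$ to be exact by Lemma \ref{lem-rec}(5'), and $j_{!}$ already preserves projectives by Lemma \ref{lem-rec}(3'); similarly, the exactness of $i^{!}$ forces $j_{*}$ to be exact by Lemma \ref{lem-rec}(5), and $j_{*}$ preserves injectives by Lemma \ref{lem-rec}(3'). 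Hence Lemma \ref{E-xt}, applied to $(j_{!},j^{*})$ and to $(j^{*},j_{*})$ respectively, gives
\[
\E(j_{!}j^{*}X,\,Y)\;\cong\;\E(j^{*}X,\,j^{*}Y)\;\cong\;\E(X,\,j_{*}j^{*}Y),
\]
and the equivalence in (2) follows.

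The only real point to be careful about is bookkeeping the hypotheses of Lemma \ref{E-xt} for each of the four adjoint pairs, and in particular using the lemmas from Section 2 that propagate exactness from $i^{*},i^{!}$ to $j_{!},j_{*}$; no further input from the cotorsion-pair structure (such as heredity) is needed, since only $\E^{1}$ appears in the characterisations of $\X$ and $\Y$.
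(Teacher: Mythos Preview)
Your proof is correct and follows essentially the same approach as the paper: both establish the chain of isomorphisms $\E_{\B}(X,i_{*}i^{!}Y)\cong\E_{\A}(i^{*}X,i^{!}Y)\cong\E_{\B}(i_{*}i^{*}X,Y)$ and $\E_{\B}(X,j_{*}j^{*}Y)\cong\E_{\C}(j^{*}X,j^{*}Y)\cong\E_{\B}(j_{!}j^{*}X,Y)$ via Lemma~\ref{E-xt}, then invoke Remark~\ref{RemCotors}. Your version is more explicit than the paper's in verifying the hypotheses of Lemma~\ref{E-xt} for each adjoint pair (in particular, pointing out that the exactness of $i^{*}$ and $i^{!}$ propagates to $j_{!}$ and $j_{*}$ via Lemma~\ref{lem-rec}(5), (5')), which the paper leaves implicit.
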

\begin{proof}
For any $X \in \mathcal{X}$, $Y \in \mathcal{Y}$, we have
$$
\begin{aligned}
& \E_\B(X,i_{*}i^{!}(Y))\cong \E_\A(i^{*}(X),i^{!}(Y))\cong  \E_\B(i_{*}i^{*}(X),Y)\\
& \E_\B(X,j_{*}j^{*}(Y))\cong \E_\C(j^{*}(X),j^{*}(Y))\cong  \E_\B(j_{!}j^{*}(X),Y)
\end{aligned}
$$
by Lemma \ref{E-xt}.
It follows that the assertions (1) and (2) are true.
\end{proof}

The following result is stronger than that of \cite{WWZ20R} at some point.
\begin{theorem}\label{main-cotor}
Let $(\A, \B, \C)$ be a recollement of extriangulated categories, and
let $(\X,\Y)$ be a cotorsion pair in $\mathcal{B}$. If $i_{*}i^{*}(\X) \subseteq \X$, $i_{*}i^{*}(\Y) \subseteq \Y$ $j_{*}j^{*}(\Y) \subseteq \Y$, and if $i^{*}, i^!$ are exact, then
\begin{itemize}
\item[\rm (1)] $(i^{*}(\mathcal{X}), i^{!}(\mathcal{Y})=i^*(\Y))$ is a cotorsion pair in $\mathcal{A}$.
\item[\rm (2)] $(j^{*}(\mathcal{X}), j^{*}(\mathcal{Y}))$ is a cotorsion pair in $\C$.
\item[\rm (3)] if $(\X,\Y)$ is hereditary, so are $(i^{*}(\mathcal{X}),i^{!}(\mathcal{Y}))$ and $(j^{*}(\X),j^{*}(\Y))$.
\item[\rm (4)] we have
$$\X=\{B\in \B\mid j^{*}(B)\in j^{*}(\X)\text{ and }i^{*}(B)\in i^{*}(\X)\},$$
$$\Y=\{B\in \B\mid j^{*}(B)\in j^{*}(\Y)\text{ and }i^{!}(B)\in i^{!}(\Y)\}.$$
    \end{itemize}
\end{theorem}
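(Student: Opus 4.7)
The plan is to attack the four parts in sequence, exploiting the fact that under the hypotheses ``$i^*$ and $i^!$ are exact'' (combined with Lemma~\ref{lem-rec}) both $j_!$ and $j_*$ become exact, and each of $i^*, i^!, j_!, j^*, j_*$ preserves projectives or injectives as needed. This will allow me to freely apply Lemma~\ref{E-xt} to transport extension groups across the recollement. For part~(1), I would first establish $i^*(\Y) = i^!(\Y)$: from the hypothesis $i_*i^*(\X) \subseteq \X$, Lemma~\ref{lem-=if}(1) gives $i_*i^!(\Y) \subseteq \Y$, and combined with $i_*i^*(\Y) \subseteq \Y$, Lemma~\ref{lem-=} applied to $\Y$ yields the claimed equality. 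For the orthogonality \textbf{(CP1)}, I would rewrite $\E_\A(i^*(X), i^!(Y)) \cong \E_\B(X, i_*i^!(Y))$ via the $(i^*, i_*)$ adjunction and Lemma~\ref{E-xt} (since $i^*$ is exact and preserves projectives); the right side vanishes because $i_*i^!(Y) \in \Y$. For \textbf{(CP2)}, I would apply the cotorsion pair $(\X, \Y)$ to $i_*(A) \in \B$ and push the two resulting $\E$-triangles through the exact functor $i^*$, using $i^*i_* \cong \Id_\A$ to recover $A$ in the middle.

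For part~(2), \textbf{(CP1)} follows from the $(j^*, j_*)$ adjunction and Lemma~\ref{E-xt} (noting $j^*$ is exact and preserves projectives by Lemma~\ref{lem-rec}(4)): $\E_\C(j^*(X), j^*(Y)) \cong \E_\B(X, j_*j^*(Y)) = 0$ by the hypothesis $j_*j^*(\Y) \subseteq \Y$. For \textbf{(CP2)}, I would apply the cotorsion pair in $\B$ to $j_!(C)$ for $C \in \C$ and push through the exact functor $j^*$, using $j^*j_! \cong \Id_\C$ to recover $C$. Part~(3) is then the same adjunction isomorphism applied in each degree $i \geq 2$, so the hereditary vanishing in $\B$ descends directly to the induced pairs in $\A$ and $\C$.

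For part~(4), the inclusions of $\X$ and $\Y$ into the right-hand sides are immediate from functoriality. For the reverse inclusion in the $\X$-description, since $i^*$ is exact, Lemma~\ref{lem-rec}(6$'$) supplies, for any $B$ on the right-hand side, an $\E$-triangle $j_!j^*(B) \to B \to i_*i^*(B)$. Applying $\E_\B(-, Y)$ for $Y \in \Y$ gives a long exact sequence; the left end $\E_\B(i_*i^*(B), Y)$ vanishes because $i^*(B) \in i^*(\X)$ forces $i_*i^*(B) \in i_*i^*(\X) \subseteq \X$, and the right end $\E_\B(j_!j^*(B), Y)$ transfers under the $(j_!, j^*)$ adjunction (with $j_!$ exact and preserving projectives by Lemma~\ref{lem-rec}) to $\E_\C(j^*(B), j^*(Y))$, which vanishes by part~(2) since $j^*(B) \in j^*(\X)$. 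Hence $\E_\B(B, \Y) = 0$ and Remark~\ref{RemCotors}(1) places $B$ in $\X$. The $\Y$-description is entirely dual, based on Lemma~\ref{lem-rec}(6) (which requires $i^!$ exact), the adjunctions $(i^*, i_*)$ and $(j^*, j_*)$, and Remark~\ref{RemCotors}(2).

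The main obstacle will be bookkeeping: keeping straight, at each step, which exactness and preservation properties of which functors are in force, so that the correct version of Lemma~\ref{E-xt} applies in each adjunction shuffle. The joint exactness of $i^*$ and $i^!$ — together with its consequences in Lemma~\ref{lem-rec} that $j_!$ and $j_*$ become exact and the relevant functors preserve projectives or injectives — is precisely what is needed throughout, and in particular is what guarantees the existence of the connecting $\E$-triangles in Lemma~\ref{lem-rec}(6)(6$'$) that drive the characterizations in part~(4).
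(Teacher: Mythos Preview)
Your proposal is correct and follows essentially the same approach as the paper's own proof: the same use of Lemmas~\ref{lem-=if} and~\ref{lem-=} for the equality $i^*(\Y)=i^!(\Y)$, the same adjunction-and-Lemma~\ref{E-xt} computations for \textbf{(CP1)} and \textbf{(HCP)}, the same push-through-an-exact-functor argument for \textbf{(CP2)}, and the same use of the $\E$-triangle from Lemma~\ref{lem-rec}(6$'$) for the characterization in part~(4). The only cosmetic difference is that in part~(4) the paper dispatches $\E_\B(i_*i^*(B),Y)$ by transferring it to $\E_\A(i^*(B),i^!(Y))$ and invoking part~(1), whereas you observe directly that $i_*i^*(B)\in i_*i^*(\X)\subseteq\X$; both are valid one-line justifications.
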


\begin{proof}
(1)
By Lemma \ref{lem-=if}, we know $i_{*}i^{!}(\mathcal{Y})\subseteq \mathcal{Y}$.
By Lemma \ref{lem-=}, we know $i^{*}(\Y)=i^{!}(\Y)$. Now
we show that $(i^{*}(\mathcal{X}), i^{!}(\mathcal{Y}))$ is a cotorsion pair in $\A$.

{\bf (CP1)}
For any $X'\in i^{*}(\X)$, there exists an object $X\in \X$ such that $X'\cong i^{*}(X)$.
Then for any $Y'\in i^{!}(\Y)$,
$$\E_{\A}(X',Y')\cong \E_{\A}(i^{*}(X),Y')\cong \E_{\B}(X,i_{*}(Y')).$$
Notice that $i_{*}(Y')\in i_{*}i^{!}(\Y)\subseteq \Y$, so $\E(X',Y')=0$.

{\bf (CP2)}
For any $A\in \A$, we know that $i_{*}(A)\in \B$.
Then there exist two $\E$-triangles
$$\xymatrix@C=15pt{ Y^1\ar[r]& X^1\ar[r]& i_{*}(A)\ar@{-->}[r]&},$$
$$\xymatrix@C=15pt{ i_{*}(A)\ar[r]& Y_1\ar[r]& X_1\ar@{-->}[r]&}$$
in $\B$ satisfying $X^1, X_1\in\X$, and $Y^1, Y_1\in\Y$. Since $i^{*}$ is exact and $i^*i_*\cong\Id_{\A}$, we deduce the following two $\E$-triangles
$$\xymatrix@C=15pt{i^{*}(Y^1)\ar[r]& i^{*}(X^1)\ar[r]& A\ar@{-->}[r]&},$$
$$\xymatrix@C=15pt{ A\ar[r]& i^{*}(Y_1)\ar[r]& i^{*}(X_1) \ar@{-->}[r]&}$$
in $\A$ satisfying $i^{*}(X^1), i^{*}(X_1)\in i^{*}(\X)$, and $ i^{*}(Y^1),  i^{*}(Y_1)\in i^{*}(\Y)=i^{!}(\Y)$.

Thus $(i^{*}(\X),i^{!}(\Y))$ is a cotorsion pair in $\A$.

(2) By using a similar argument as (1), we can prove that $(j^{*}(\X),j^{*}(\Y))$ is a cotorsion pair in $\C$.

(3) It is clear that
$$\X\subseteq\{B\in \B\mid j^{*}(B)\in j^{*}(\X)\text{ and }i^{*}(B)\in i^{*}(\X)\},$$
$$\Y\subseteq\{B\in \B\mid j^{*}(B)\in j^{*}(\Y)\text{ and }i^{!}(B)\in i^{!}(\Y)\}.$$
Conversely, suppose $X\in \{B\in \B\mid j^{*}(B)\in j^{*}(\X)\text{ and }i^{*}(B)\in i^{*}(\X)\}$.
Since $i^{*}$ is exact, by Lemma \ref{lem-rec}, there exists an $\mathbb{E}$-triangle
$$
  \xymatrix{j_! j^\ast (X)\ar[r]&X\ar[r]&i_\ast i^\ast (X)\ar@{-->}[r]&}
$$
in $\mathcal{B}$.
For any $Y \in \mathcal{Y}$, we obtain the following exact sequence
$$
\xymatrix@C=15pt{
\E_{\B} (i_{*}i^{*}(X), Y) \ar[r]&\E_{\B} (X, Y) \ar[r]&\E_{\B}  (j_{!}j^{*}(X), Y)}.$$
By (1) and (2), we know that  $(i^{*}(\mathcal{X}), i^{!}(\mathcal{Y}))$ and $(j^{*}(\X),j^{*}(\Y))$ are cotorsion pairs in $\A$ and $\C$ respectively, so $\E_{\B}(i_{*}i^{*}(X), Y)\cong \E_{\A}(i^{*}(X), i^{!}(Y))=0$ and $\E_{\B}(j_{!}j^{*}(X), Y)\cong \E_{\C}(j^{*}(X), j^{*}(Y))=0$ by Lemma \ref{E-xt}.
It follows that $\E(X,Y)=0$ and $\E(X,\Y)=0$.
 Then $X\in \X$ by Remark \ref{RemCotors}, and thus
 $$\{B\in \B\mid j^{*}(B)\in j^{*}(\X)\text{ and }i^{*}(B)\in i^{*}(\X)\}\subseteq\X.$$
 Similarly, we can prove
 $$\{B\in \B\mid j^{*}(B)\in j^{*}(\Y)\text{ and }i^{!}(B)\in i^{!}(\Y)\}\subseteq\Y.$$

(4)
{\bf (HCP)}
For any $X'\in i^{*}(\X)$ and any $Y'\in i^{!}(\Y)$, there exist $X\in \X$ and $Y\in \Y$ such that $X'\cong i^{*}(X)$ and $Y\cong i^{!}(Y)$.
Notice that
$$\E^{i}_{\A}(X',Y')\cong \E^{i}_{\A}(i^{*}(X),i^{!}(Y))\cong \E^{i}_{\B}(i_{*}i^{*}(X),Y)$$
for each $i\geq 2$ by Lemma \ref{E-xt}, so $\E^{i}(X',Y')=0$ by the assumption that $i_{*}i^{*}(\X)\subseteq \X$ and $(\X,\Y)$ is hereditary.
Thus $\E^i(i^{*}(\X),i^{!}(\Y))=0$ for each $i\geq 2$, and so $(i^{*}(\X),i^{!}(\Y))$ is hereditary.
Similarly, we can prove that
$(j^{*}(\X),j^{*}(\Y))$ is hereditary.
\end{proof}

Combing Theorem \ref{main-dim} and Theorem \ref{main-cotor}, we have the following the result.
\begin{proposition}
Let $(\mathcal{A},\mathcal{B},\mathcal{C})$ be a recollement of extriangulated categories, and let $(\X,\Y)$ be a hereditary cotorsion pair in $\B$. If $i_{*}i^{*}(\X) \subseteq \X$, $i_{*}i^{*}(\Y) \subseteq \Y$ $j_{*}j^{*}(\Y) \subseteq \Y$, and if $i^{*}, i^!$ are exact, then
$$\max\{i^!(\Y)\text{-}\cores \A, j^*(\Y)\text{-}\cores \C\}\leq\Y\text{-}\cores \B \leq i^!(\Y)\text{-}\cores \A+ j^*(\Y)\text{-}\cores \mathcal{C} + 1.$$
\end{proposition}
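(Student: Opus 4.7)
The plan is to reduce the statement directly to Theorem~\ref{main-dim}(8) by using Theorem~\ref{main-cotor} to produce the corresponding hereditary cotorsion pairs in $\A$ and $\C$, and then translating all three $\Y$-coresolution dimensions into $\X^\perp$-coresolution dimensions via Remark~\ref{RemCotors}.

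First, I would apply Theorem~\ref{main-cotor} to the hereditary cotorsion pair $(\X,\Y)$ in $\B$. Since the three containment hypotheses $i_\ast i^\ast(\X)\subseteq\X$, $i_\ast i^\ast(\Y)\subseteq\Y$, $j_\ast j^\ast(\Y)\subseteq\Y$ and the exactness of $i^\ast,i^!$ are in force, Theorem~\ref{main-cotor}(1)(2)(3) gives two hereditary cotorsion pairs $(i^\ast(\X),\,i^!(\Y))$ in $\A$ and $(j^\ast(\X),\,j^\ast(\Y))$ in $\C$ (with $i^\ast(\Y)=i^!(\Y)$ by Lemma~\ref{lem-=}, using that both $i_\ast i^\ast(\Y)\subseteq\Y$ and, by Lemma~\ref{lem-=if}, $i_\ast i^!(\Y)\subseteq\Y$). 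By Remark~\ref{RemCotors}, because all three cotorsion pairs are hereditary, we obtain the identifications $\Y=\X^\perp$, $i^!(\Y)=(i^\ast(\X))^\perp$ and $j^\ast(\Y)=(j^\ast(\X))^\perp$.

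Next, I would set $\X'=i^\ast(\X)$ and $\X''=j^\ast(\X)$ in Theorem~\ref{main-dim}(8) and check its four containment hypotheses. Under the identifications above: $j_\ast(\X''^\perp)=j_\ast j^\ast(\Y)\subseteq\Y=\X^\perp$ by hypothesis; $i_\ast(\X'^\perp)=i_\ast i^!(\Y)=i_\ast i^\ast(\Y)\subseteq\Y=\X^\perp$, again by hypothesis combined with Lemma~\ref{lem-=}; while $j^\ast(\X^\perp)=j^\ast(\Y)=\X''^\perp$ and $i^!(\X^\perp)=i^!(\Y)=\X'^\perp$ hold trivially. Since $i^!$ is exact by assumption, all hypotheses of Theorem~\ref{main-dim}(8) are met.

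Applying Theorem~\ref{main-dim}(8) then yields
\[
\max\{\X'^\perp\text{-}\cores\A,\ \X''^\perp\text{-}\cores\C\}\ \le\ \X^\perp\text{-}\cores\B\ \le\ \X'^\perp\text{-}\cores\A+\X''^\perp\text{-}\cores\C+1,
\]
which, after substituting $\X^\perp=\Y$, $\X'^\perp=i^!(\Y)$ and $\X''^\perp=j^\ast(\Y)$, is exactly the inequality in the proposition. The main obstacle is purely bookkeeping: making sure the identification $i^\ast(\Y)=i^!(\Y)$ is invoked when checking $i_\ast(\X'^\perp)\subseteq\X^\perp$, since the hypothesis is stated with $i^\ast$ rather than $i^!$; everything else is an immediate translation between cotorsion-pair data and $\perp$-closed subcategories.
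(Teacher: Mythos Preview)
Your proposal is correct and follows exactly the route the paper indicates: the proposition is stated in the paper as an immediate consequence of combining Theorem~\ref{main-dim} and Theorem~\ref{main-cotor}, with no further details given. You have supplied precisely the bookkeeping the paper omits, correctly invoking Lemma~\ref{lem-=if}(1) to obtain $i_\ast i^!(\Y)\subseteq\Y$ from $i_\ast i^\ast(\X)\subseteq\X$, and then Remark~\ref{RemCotors} to translate between the cotorsion-pair classes and the $\perp$-subcategories so that Theorem~\ref{main-dim}(8) applies.
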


By applying Theorem \ref{main-cotor} to triangulated categories, we get the following
\begin{corollary}
{\rm(\cite[Theorem 3.3]{C13C})}
Let $(\A, \B, \C)$ be a recollement of triangulated categories, and
let $(\X,\Y)$ be a cotorsion pair in $\mathcal{B}$. If $i_{*}i^{*}(\X) \subseteq \X$, $i_{*}i^{*}(\Y) \subseteq \Y$ $j_{*}j^{*}(\Y) \subseteq \Y$, then
\begin{itemize}
\item[\rm (1)] $(i^{*}(\mathcal{X}), i^{!}(\mathcal{Y})=i^{*}(\Y))$ is a cotorsion pair in $\mathcal{A}$.

\item[\rm (2)] $(j^{*}(\mathcal{X}), j^{*}(\mathcal{Y}))$ is a cotorsion pair in $\C$.

\item[\rm(3)] we have
$$\X=\{B\in \B\mid j^{*}(B)\in j^{*}(\X)\text{ and }i^{*}(B)\in i^{*}(\X)\},$$
$$\Y=\{B\in \B\mid j^{*}(B)\in j^{*}(\Y)\text{ and }i^{!}(B)\in i^{!}(\Y)\}.$$
    \end{itemize}
\end{corollary}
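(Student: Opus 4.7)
The plan is to deduce this corollary directly from Theorem \ref{main-cotor}. The key observation is that in a recollement of triangulated categories, every functor involved is a triangle functor, and by the remark following Definition \ref{right} a triangle functor between triangulated categories is simultaneously left and right exact in the extriangulated sense. In particular the exactness hypotheses on $i^{*}$ and $i^{!}$ that appear in Theorem \ref{main-cotor} are automatic in the triangulated setting, so they do not need to be listed in the hypotheses of the corollary.

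Granting this, the proof reduces to verifying that the remaining assumptions of Theorem \ref{main-cotor} are in place and then reading off the conclusions. The assumptions $i_{*}i^{*}(\X)\subseteq\X$, $i_{*}i^{*}(\Y)\subseteq\Y$ and $j_{*}j^{*}(\Y)\subseteq\Y$ are assumed in the corollary verbatim. From $i_{*}i^{*}(\X)\subseteq\X$ together with the exactness of $i^{*}$ and $i^{!}$, Lemma \ref{lem-=if}(1) gives the ``missing'' hypothesis $i_{*}i^{!}(\Y)\subseteq\Y$, while Lemma \ref{lem-=} then yields the identity $i^{!}(\Y)=i^{*}(\Y)$ used in the statement of part (1). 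At this point Theorem \ref{main-cotor}(1) delivers that $(i^{*}(\X),\,i^{!}(\Y))=(i^{*}(\X),\,i^{*}(\Y))$ is a cotorsion pair in $\A$, Theorem \ref{main-cotor}(2) gives that $(j^{*}(\X),\,j^{*}(\Y))$ is a cotorsion pair in $\C$, and Theorem \ref{main-cotor}(4) supplies the characterizations
\[
\X=\{B\in\B\mid j^{*}(B)\in j^{*}(\X)\text{ and }i^{*}(B)\in i^{*}(\X)\},\qquad
\Y=\{B\in\B\mid j^{*}(B)\in j^{*}(\Y)\text{ and }i^{!}(B)\in i^{!}(\Y)\},
\]
which is precisely part (3) of the corollary. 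The hereditary clause of Theorem \ref{main-cotor} is simply not invoked.

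The only genuine point requiring attention — and the ``main obstacle'', if there is one — is the bookkeeping that turns exactness of triangle functors into the extriangulated notion of exactness needed to invoke Theorem \ref{main-cotor} and the auxiliary Lemmas \ref{E-xt}, \ref{lem-=}, and \ref{lem-=if}. This is precisely the content of the remark after Definition \ref{right}, so no new argument is required; once it is flagged, the corollary is a purely formal consequence of the extriangulated theorem and there are no further calculations to perform.
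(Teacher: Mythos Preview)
Your proposal is correct and matches the paper's approach: the corollary is stated immediately after Theorem \ref{main-cotor} with no separate proof, the implicit argument being precisely that in a recollement of triangulated categories all six functors are triangle functors, hence exact in the extriangulated sense by the remark following Definition \ref{right}, so Theorem \ref{main-cotor} applies directly. Your additional remarks invoking Lemmas \ref{lem-=if} and \ref{lem-=} are already absorbed into the proof of Theorem \ref{main-cotor}(1), so they are harmless but not needed separately.
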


Conversely, we have the following result.

\begin{proposition}\label{main-conve}
Let $(\A, \B, \C)$ be a recollement of extriangulated categories, and let $(\X,\Y)$ be a pair of subcategories in $\mathcal{B}$.
Assume that $(i^{*}(\mathcal{X}),i^{!}(\mathcal{Y}))$ and $(j^{*}(\X),j^{*}(\Y))$ are cotorsion pairs in $\A$ and $\C$ respectively, and assume that $i_{*}i^{*}(\X) \subseteq \X$, $i_{*}i^{*}(\Y) \subseteq \Y$ $j_{*}j^{*}(\Y) \subseteq \Y$, and $i^{*}, i^{!}$ are exact, then
$(\X,\Y)$ is a cotorsion pair in $\mathcal{C}$ if one of the following conditions is satisfied.
\begin{itemize}
\item[\rm (i)] $i_{*}i^{*}\cong \Id_{\B}$.
\item[\rm (ii)] $j_{!}j^{*}\cong {\Id}_{\B}$ and $j_{!}j^{*}(\Y)\subseteq\Y$.
\end{itemize}
Moreover, if $(i^{*}(\mathcal{X}),i^{!}(\mathcal{Y}))$ or $(j^{*}(\X),j^{*}(\Y))$ is hereditary, then so is $(\X,\Y)$.
\end{proposition}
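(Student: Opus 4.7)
The plan is to verify (CP1) and (CP2) for $(\X,\Y)$ in $\B$ by treating the two conditions separately, in each case lifting the cotorsion-pair structure from $\A$ (under (i)) or from $\C$ (under (ii)) through the identifying isomorphism $i_{*}i^{*}\cong \Id_{\B}$ or $j_{!}j^{*}\cong \Id_{\B}$.

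For (CP1), fix $X\in\X$ and $Y\in\Y$. Under (i), the isomorphism $X\cong i_{*}i^{*}(X)$ combined with Lemma \ref{E-xt} applied to the adjunction $(i_{*},i^{!})$---valid since $i_{*}$ is exact and preserves projectives by Lemma \ref{lem-rec}(4) (as $i^{!}$ is exact)---gives
\begin{equation*}
\E_{\B}(X,Y)\cong \E_{\B}(i_{*}i^{*}(X),Y)\cong \E_{\A}(i^{*}(X),i^{!}(Y))=0,
\end{equation*}
the last vanishing coming from $(i^{*}\X,i^{!}\Y)$ being a cotorsion pair in $\A$. Under (ii), the parallel computation via $(j_{!},j^{*})$, with $j^{*}$ exact and preserving injectives by Lemma \ref{lem-rec}(4') (as $i^{*}$ is exact), yields $\E_{\B}(X,Y)\cong \E_{\C}(j^{*}(X),j^{*}(Y))=0$.

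For (CP2), given $B\in\B$, I would apply the cotorsion pair in $\A$ to $i^{*}(B)$ (under (i)) or in $\C$ to $j^{*}(B)$ (under (ii)), then push the resulting two $\E$-triangles into $\B$ by the exact functor $i_{*}$ (always exact) or $j_{!}$ (exact by Lemma \ref{lem-rec}(5') since $i^{*}$ is exact), identifying the appropriate term with $B$ via the assumed isomorphism. Membership of the $\X$-end follows from $i_{*}i^{*}(\X)\subseteq \X$ under (i), or from $j_{!}j^{*}(X)\cong X\in\X$ under (ii); the $\Y$-end in case (ii) is directly handled by the hypothesis $j_{!}j^{*}(\Y)\subseteq\Y$. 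The one delicate point, and the main obstacle, is the $\Y$-end in case (i): we need $i_{*}i^{!}(\Y)\subseteq\Y$, but the hypotheses only supply $i_{*}i^{*}(\Y)\subseteq\Y$. This is resolved by noting that once both the unit $\Id_{\A}\cong i^{*}i_{*}$ (from Lemma \ref{lem-rec}(1)) and the counit $i_{*}i^{*}\cong \Id_{\B}$ (by (i)) of the adjunction $(i^{*},i_{*})$ are isomorphisms, $i_{*}$ is an equivalence of categories, whence $i^{!}\cong i^{*}$ as functors $\B\to\A$ and therefore $i_{*}i^{!}(\Y)\cong i_{*}i^{*}(\Y)\subseteq\Y$.

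Finally, the hereditary claim follows from the same argument in higher degrees: Lemma \ref{E-xt} gives $\E_{\B}^{i}(X,Y)\cong \E_{\A}^{i}(i^{*}X,i^{!}Y)$ under (i) and $\E_{\B}^{i}(X,Y)\cong \E_{\C}^{i}(j^{*}X,j^{*}Y)$ under (ii) for every $i\geq 2$, and both right-hand sides vanish as soon as the corresponding cotorsion pair in $\A$ or $\C$ is hereditary.
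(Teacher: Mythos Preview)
Your proof is correct and, for (CP2) and the hereditary statement, matches the paper's argument essentially verbatim. The one genuine difference is in (CP1): you split into cases and use the identifying isomorphism $X\cong i_{*}i^{*}(X)$ (resp.\ $X\cong j_{!}j^{*}(X)$) together with Lemma~\ref{E-xt}, whereas the paper gives a single uniform argument that does not invoke (i) or (ii) at all. Namely, since $i^{*}$ is exact, Lemma~\ref{lem-rec}(6') provides the $\E$-triangle $j_{!}j^{*}(X)\to X\to i_{*}i^{*}(X)\dashrightarrow$, and applying $\E_{\B}(-,Y)$ yields an exact sequence whose outer terms $\E_{\B}(i_{*}i^{*}(X),Y)\cong\E_{\A}(i^{*}(X),i^{!}(Y))$ and $\E_{\B}(j_{!}j^{*}(X),Y)\cong\E_{\C}(j^{*}(X),j^{*}(Y))$ both vanish by the assumed cotorsion pairs in $\A$ and $\C$. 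This is slightly stronger, since it shows $\E_{\B}(\X,\Y)=0$ under the standing hypotheses alone. Conversely, your treatment of the ``delicate point'' in (CP2)(i)---observing that $i_{*}i^{*}\cong\Id_{\B}$ together with $i^{*}i_{*}\cong\Id_{\A}$ forces $i_{*}$ to be an equivalence and hence $i^{!}\cong i^{*}$---is more explicit than the paper, which simply writes the $i^{!}(\Y)$-terms as $i^{*}(Y^{1})$, $i^{*}(Y_{1})$ and asserts $i^{*}(\Y)=i^{!}(\Y)$ without further comment.
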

\begin{proof}
{\bf (CP1)}
For any $X\in \X$ and $Y\in\Y$.
Since $i^{*}$ is exact by assumption, by Lemma \ref{lem-rec}, there exists an $\mathbb{E}$-triangle
$$
  \xymatrix{j_! j^\ast (X)\ar[r]&X\ar[r]&i_\ast i^\ast (X)\ar@{-->}[r]&}
$$
in $\mathcal{B}$.
Then we obtain the following exact sequence
$$
\xymatrix@C=15pt{
\E_{\B} (i_{*}i^{*}(X), Y) \ar[r]&\E_{\B} (X, Y) \ar[r]&\E_{\B}  (j_{!}j^{*}(X), Y)}.$$
 Notice that $(i^{*}(\mathcal{X}), i^{!}(\mathcal{Y}))$ and $(j^{*}(\X),j^{*}(\Y))$ are cotorsion pairs in $\A$ and $\C$ respectively, so
 $$\E_{\B} (i_{*}i^{*}(X), Y)\cong \E_{\A} (i^{*}(X), i^{!}(Y))=0~~\mbox{and}~~\E_{\B} (j_{!}j^{*}(X), Y)\cong \E_{\C} (j^{*}(X), j^{*}(Y))=0$$
  by Lemma \ref{E-xt}.
It follows that $\E(X,Y)=0$ and $\E(\X,\Y)=0$.

{\bf (CP2)}
{(i)}
For any $B\in \B$, we know $i^{*}(B)\in \A$.
Then there exist the following two $\E$-triangles
$$
\xymatrix@C=15pt{
i^{*}(Y^1) \ar[r]& i^{*}(X^1) \ar[r] &i^{*}(B) \ar@{-->}[r]&},
$$
$$
\xymatrix@C=15pt{
i^{*}(B) \ar[r]& i^{*}(Y_1) \ar[r]& i^{*}(X_1) \ar@{-->}[r]&}
$$
in $\A$ with $i^{*}(X^1), i^{*}(X_1)\in i^{*}(\X)$ and $i^{*}(Y^1), i^{*}(Y_1)\in i^{*}(\Y)=i^!(\Y)$. Since $i_{*}$ is exact and $i_{*} i^{*} \cong \operatorname{Id}_{\B}$, we get the following $\E$-triangles
$$
\xymatrix@C=15pt{
i_{*}i^{*}(Y^1) \ar[r]& i_{*}i^{*}(X^1) \ar[r] &B\ar@{-->}[r]&},
$$
$$
\xymatrix@C=15pt{
B\ar[r]&i_{*} i^{*}(Y_1) \ar[r]&i_{*} i^{*}(X_1) \ar@{-->}[r]&}
$$
in $\B$ with $i_{*}i^{*}(X^1), i_{*}i^{*}(X_1)\in i_{*}i^{*}(\X)\subseteq \X$ and $i_{*}i^{*}(Y^1), i_{*}i^{*}(Y_1)\in i_{*}i^{*}(\Y)\subseteq \Y$.

{(ii)}
For any $B\in \B$, we know $j^{*}(B)\in \C$.
Then there exist the following two $\E$-triangles
$$
\xymatrix@C=15pt{
j^{*}(Y^1) \ar[r]& j^{*}(X^1) \ar[r] &j^{*}(B) \ar@{-->}[r]&},
$$
$$
\xymatrix@C=15pt{
j^{*}(B) \ar[r]& j^{*}(Y_1) \ar[r]&j^{*}(X_1) \ar@{-->}[r]&}
$$
in $\C$ with $j^{*}(X^1), j^{*}(X_1)\in j^{*}(\X)$ and $j^{*}(Y^1), j^{*}(Y_1)\in j^{*}(\Y)$. Since $j_{!}$ is exact by Lemma \ref{lem-rec} and $j_{!}j^{*} \cong {\Id}_{\B}$ by assumption, we get the following $\E$-triangles
$$
\xymatrix@C=15pt{
j_{!}j^{*}(Y^1) \ar[r]&j_{!} j^{*}(X^1) \ar[r] &B \ar@{-->}[r]&},
$$
$$
\xymatrix@C=15pt{
B\ar[r]& j_{!}j^{*}(Y_1) \ar[r]&j_{!}j^{*}(X_1) \ar@{-->}[r]&}
$$
in $\B$ with $j_{!}j^{*}(X^1), j_{!}j^{*}(X_1)\in j_{!}j^{*}(\X)\subseteq \X$ and $j_{!}j^{*}(Y^1), j_{!}j^{*}(Y_1)\in j_{!}j^{*}(\Y)\subseteq \Y$, where the fact $ j_{!}j^{*}(\X)\subseteq \X$ follows from Lemma \ref{lem-=if} and the assumption $ j_{*}j^{*}(\Y)\subseteq \Y$.

Thus $(\X,\Y)$ is a cotorsion pair in $\B$.

Moreover, for any $X\in \X$ and $Y\in \Y$.
If $(i^{*}(\mathcal{X}),i^{!}(\mathcal{Y}))$ is hereditary, then
$$
\E_{\B}^i(X, Y) \cong \E_{\B}^i(X, i_{*}i^{*}(Y)) \cong \E_{\A}^i(i^{*}(X), i^{*}(Y))=0.
$$
for any $i\geq 2$.

If $(j^{*}(\X),j^{*}(\Y))$ is hereditary, then
$$
\E_{\B}^i(X, Y) \cong \E_{\B}^i(j_{!}j^{*}(X), Y) \cong \E_{\A}^i(j^{*}(X), j^{*}(Y))=0.
$$
for any $i\geq 2$.

Then $(\X,\Y)$ is hereditary.
\end{proof}

The following result discusses the cotorsion triples in recollements of extriangulated categories.
\begin{theorem}\label{prop-cot-trip}
Let $(\A, \B, \C)$ be a recollement of extriangulated categories, and let $\mathcal{X}$, $\mathcal{Y}$, $\mathcal{Z}$ be subcategories of $\B$.
Assume that $i_{*}i^{*}(\X) \subseteq \X$, $i_{*}i^{*}(\Y) \subseteq \Y, i_{*}i^{*}(\mathcal{Z}) \subseteq \mathcal{Z}, j_{*}j^{*}(\mathcal{Y}) \subseteq \mathcal{Y}, j_{*}j^{*}(\mathcal{Z}) \subseteq \mathcal{Z}$ and $i^{*}, i^!$ are exact, then the following statements hold.
\begin{itemize}
\item[\rm (1)] If $(\mathcal{X}, \mathcal{Y}, \mathcal{Z})$ is a cotorsion triple in $B$, then
\begin{itemize}
\item[\rm (a)] $(i^{*}(\mathcal{X}), i^{*}(\mathcal{Y})=$ $i^{!}(\mathcal{Y}), i^{!}(\mathcal{Z}))$ is a cotorsion triple in $\A$.
\item[\rm (b)] $(j^{*}(\mathcal{X}), j^{*}(\mathcal{Y}), j^{*}(\mathcal{Z}))$ is a cotorsion triple in $\C$.
\item[\rm (c)] if $(\mathcal{X}, \mathcal{Y}, \mathcal{Z})$ is hereditary, so are $(i^{*}(\mathcal{X}), i^{*}(\mathcal{Y})=i^{!}(\mathcal{Y}), i^{!}(\mathcal{Z}))$ and $(j^{*}(\mathcal{X}), j^{*}(\mathcal{Y}),j^{*}(\mathcal{Z}))$.
\end{itemize}
\item[\rm (2)] If $(i^{*}(\mathcal{X}), i^{*}(\mathcal{Y})=$ $i^{!}(\mathcal{Y}), i^{!}(\mathcal{Z}))$ and $(j^{*}(\mathcal{X}), j^{*}(\mathcal{Y}), j^{*}(\mathcal{Z}))$ are cotorsion triples in $\A$ and $\C$ respectively, then $(\X,\Y,\Z)$ is a cotorsion triple in $\mathcal{B}$ if one of the following conditions is satisfied.
\begin{itemize}
\item[\rm (i)] $i_{*}i^{*}\cong {\Id}_{\B}$.  %, and $i_{*}i^{!}\cong {\Id}_{\B}$;
\item[\rm (ii)] $j_{!}j^{*}\cong {\Id}_{\B}$ and $j_{*}j^{*}\cong {\Id}_{\B}$;
\end{itemize}
Moreover, if $(i^{*}(\mathcal{X}), i^{*}(\mathcal{Y})=$ $i^{!}(\mathcal{Y}), i^{!}(\mathcal{Z}))$ or $(j^{*}(\mathcal{X}), j^{*}(\mathcal{Y}), j^{*}(\mathcal{Z}))$ is hereditary, then so is $(\X,\Y,\Z)$.
    \end{itemize}
\end{theorem}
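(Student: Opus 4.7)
The strategy is to recognise that $(\X,\Y,\Z)$ being a cotorsion triple in $\B$ is exactly the assertion that $(\X,\Y)$ and $(\Y,\Z)$ are \emph{both} cotorsion pairs, so one simply feeds each of these two pairs through Theorem \ref{main-cotor} (for part (1)) or Proposition \ref{main-conve} (for part (2)). No genuinely new constructions are needed; the whole point is to verify that the hypothesis package for the triple decomposes correctly into the hypothesis package separately required for each constituent pair, and that the resulting identifications in $\A$ glue consistently.

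For part (1), the sub-collection of assumptions $i_{*}i^{*}(\X)\subseteq\X$, $i_{*}i^{*}(\Y)\subseteq\Y$, $j_{*}j^{*}(\Y)\subseteq\Y$ together with exactness of $i^{*},i^{!}$ is exactly what Theorem \ref{main-cotor} needs to lift $(\X,\Y)$, yielding the cotorsion pair $(i^{*}(\X),\,i^{!}(\Y)=i^{*}(\Y))$ in $\A$ and $(j^{*}(\X),j^{*}(\Y))$ in $\C$. Symmetrically, $i_{*}i^{*}(\Y)\subseteq\Y$, $i_{*}i^{*}(\Z)\subseteq\Z$, $j_{*}j^{*}(\Z)\subseteq\Z$ plus exactness feed Theorem \ref{main-cotor} on $(\Y,\Z)$, producing $(i^{*}(\Y)=i^{!}(\Y),\,i^{!}(\Z))$ in $\A$ and $(j^{*}(\Y),j^{*}(\Z))$ in $\C$. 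The identification $i^{*}(\Y)=i^{!}(\Y)$ coming out of the two separate applications is the same (it is built into the conclusion of Theorem \ref{main-cotor}(1), and ultimately rests on Lemma \ref{lem-=}), so the middle subcategory in $\A$ really does glue, and the middle subcategory $j^{*}(\Y)$ in $\C$ is automatically the same in both pairs. Hence (a) and (b) hold; part (c) is then immediate by invoking Theorem \ref{main-cotor}(3) on each of the two pairs separately.

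For part (2), the same two-pair reduction is used with Proposition \ref{main-conve} in place of Theorem \ref{main-cotor}. Under hypothesis (i), $i_{*}i^{*}\cong\Id_{\B}$ lets Proposition \ref{main-conve}(i) apply directly to both $(\X,\Y)$ and $(\Y,\Z)$. Under hypothesis (ii), the assumption $j_{!}j^{*}\cong\Id_{\B}$ forces $j_{!}j^{*}(\Y)\subseteq\Y$ and $j_{!}j^{*}(\Z)\subseteq\Z$ for free (both $\Y$ and $\Z$ are closed under isomorphism), so Proposition \ref{main-conve}(ii) applies to each pair; the extra assumption $j_{*}j^{*}\cong\Id_{\B}$ similarly makes the containments $j_{*}j^{*}(\Y)\subseteq\Y$ and $j_{*}j^{*}(\Z)\subseteq\Z$ automatic, aligning the hypotheses with those already listed in the outer part of Proposition \ref{main-conve}. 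In either case $(\X,\Y)$ and $(\Y,\Z)$ are cotorsion pairs in $\B$, so $(\X,\Y,\Z)$ is a cotorsion triple. The hereditary statement is inherited from the corresponding hereditary clause of Proposition \ref{main-conve}, applied to whichever of the two constituent pairs is known to be hereditary downstairs. The main obstacle, as already indicated, is purely bookkeeping: tracking the hypothesis lists and verifying the consistency of $i^{*}(\Y)=i^{!}(\Y)$ across the two applications; once this is done, the proof is essentially mechanical.
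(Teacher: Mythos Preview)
Your proposal is correct and follows essentially the same approach as the paper: both reduce the triple $(\X,\Y,\Z)$ to its two constituent cotorsion pairs and invoke Theorem~\ref{main-cotor} for part (1) and Proposition~\ref{main-conve} for part (2), with the identification $i^{*}(\Y)=i^{!}(\Y)$ guaranteeing that the middle terms glue in $\A$. The only cosmetic difference is in (2)(ii) for the pair $(\Y,\Z)$: the paper writes out the (CP2) argument directly using $j_{*}$ and the hypothesis $j_{*}j^{*}\cong\Id_{\B}$, whereas you appeal to Proposition~\ref{main-conve}(ii) via $j_{!}$ (noting that $j_{!}j^{*}\cong\Id_{\B}$ forces $j_{!}j^{*}(\Z)\subseteq\Z$); both routes are valid and amount to the same computation.
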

\begin{proof}
(1)
(a) By Theorem \ref{main-cotor}, we have that $(i^{*}(\X),i^{!}(\Y)=i^{*}(\Y))$ and $(i^{*}(\Y),i^{!}(\Z))$ are cotorsion pairs in $\A$. Then
$(i^{*}(\X), i^{*}(\Y)=i^{!}(\Y), i^{!}(\Z))$ is a cotorsion triple in $\A$.

(b)
By Theorem \ref{main-cotor}, we know that $(j^{*}(\X),j^{*}(\Y))$ and $(j^{*}(\Y),j^{*}(\Z))$ are cotorsion pairs in $\C$.
So $(j^{*}(\X),j^{*}(\Y), j^{*}(\Z))$ is a cotorsion triple in $\C$.

(c)
Using a similar argument as Theorem \ref{main-cotor}(3), we obtain the assertion.

(2)
(a) It follows from Proposition \ref{main-conve} that $(\X,\Y)$ is a cotorsion pair in $\B$.
Using a similar argument as Proposition \ref{main-conve}~{\bf (CP1)}, we get $\E(\Y,\Z)=0$.

Now we show that $(\Y,\Z)$ is a cotorsion pair in $\B$.

{\bf (CP2)}
(i) For any $B\in \B$, we know $i^{*}(B)\in \A$.
Then there exist the following two $\E$-triangles
$$
\xymatrix@C=15pt{
i^{*}(Z^1) \ar[r]& i^{*}(Y^1) \ar[r] &i^{*}(B) \ar@{-->}[r]&},
$$
$$
\xymatrix@C=15pt{
i^{*}(B) \ar[r]& i^{*}(Z_1) \ar[r]& i^{*}(Y_1) \ar@{-->}[r]&}
$$
in $\A$ with $i^{*}(Y^1), i^{*}(Y_1)\in i^{*}(\Y)$ and $i^{*}(Z^1), i^{*}(Z_1)\in i^{*}(\Z)=i^{!}(\Z)$. Since $i_{*}$ is exact and $i_{*} i^{*} \cong {\Id}_{\B}$, we get the following $\E$-triangles
$$
\xymatrix@C=15pt{
i_{*}i^{*}(Z^1) \ar[r]& i_{*}i^{*}(Y^1) \ar[r] &B \ar@{-->}[r]&},
$$
$$
\xymatrix@C=15pt{
B\ar[r]& i_{*}i^{*}(Z_1) \ar[r]&i_{*} i^{*}(Y_1) \ar@{-->}[r]&}
$$
in $\B$ with $i_{*}i^{*}(Y^1), i_{*}i^{*}(Y_1)\in i_{*}i^{*}(\Y)\subseteq \Y$ and $i_{*}i^{*}(Z^1), i_{*}i^{*}(Z_1)\in i_{*}i^{*}(\Z)\subseteq \Z$.

{(ii)}
For any $B\in \B$, we know $j^{*}(B)\in \C$.
Then there exist the following two $\E$-triangles
$$
\xymatrix@C=15pt{
j^{*}(Z^1) \ar[r]& j^{*}(Y^1) \ar[r] &j^{*}(B) \ar@{-->}[r]&},
$$
$$
\xymatrix@C=15pt{
j^{*}(B) \ar[r]& j^{*}(Z_1) \ar[r]&j^{*}(Y_1) \ar@{-->}[r]&}
$$
in $\C$ with $j^{*}(Y^1), j^{*}(Y_1)\in j^{*}(\Y)$ and $j^{*}(Z^1), j^{*}(Z_1)\in j^{*}(\Z)$. Since $j_{*}$ is exact by Lemma \ref{lem-rec} and $j_{*}j^{*} \cong {\Id}_{\B}$, we get the following $\E$-triangles
$$
\xymatrix@C=15pt{
j_{*}j^{*}(Z^1) \ar[r]&j_{*} j^{*}(Y^1) \ar[r] &B \ar@{-->}[r]&},
$$
$$
\xymatrix@C=15pt{
B \ar[r]& j_{*}j^{*}(Z_1) \ar[r]&j_{*}j^{*}(Y_1) \ar@{-->}[r]&}
$$
in $\B$ with $j_{*}j^{*}(Y^1), j_{*}j^{*}(Y_1)\in j_{*}j^{*}(\Y)\subseteq \Y$ and $j_{*}j^{*}(Z^1), j_{*}j^{*}(Z_1)\in j_{*}j^{*}(\Z)\subseteq \Z$.

Then $(\Y,\Z)$ is a cotorsion pair in $\B$.

Moreover, we can obtain the final assertion by using a similar argument as Proposition \ref{main-conve}.
\end{proof}

By applying Proposition \ref{prop-cot-trip} to abelian categories, we get the following result.

\begin{corollary}{\rm (cf. \cite[Theorem 2.11]{FH22T})}
Let $(\A, \B, \C)$ be a recollement of abelian categories, and let $\mathcal{X}$, $\mathcal{Y}$, $\mathcal{Z}$ be subcategories of $\B$.
Assume that $i_{*}i^{*}(\X) \subseteq \X$, $i_{*}i^{*}(\Y) \subseteq \Y, i_{*}i^{*}(\mathcal{Z}) \subseteq \mathcal{Z}, j_{*}j^{*}(\mathcal{Y}) \subseteq \mathcal{Y}, j_{*}j^{*}(\mathcal{Z}) \subseteq \mathcal{Z}$ and $i^{*}, i^{!}$ are exact, then the following statements hold.
\begin{itemize}
\item[\rm (1)] If $(\mathcal{X}, \mathcal{Y}, \mathcal{Z})$ is a hereditary cotorsion triple in $\B$, then
\begin{itemize}
\item[\rm (a)] $(i^{*}(\mathcal{X}), i^{*}(\mathcal{Y})=$ $i^{!}(\mathcal{Y}), i^{!}(\mathcal{Z}))$ is a hereditary cotorsion triple in $\A$.
\item[\rm (b)] $(j^{*}(\mathcal{X}), j^{*}(\mathcal{Y}), j^{*}(\mathcal{Z}))$ is a hereditary cotorsion triple in $\C$.
\end{itemize}
\item[\rm (2)] If $(i^{*}(\mathcal{X}), i^{*}(\mathcal{Y})=$ $i^{!}(\mathcal{Y}), i^{!}(\mathcal{Z}))$ and $(j^{*}(\mathcal{X}), j^{*}(\mathcal{Y}), j^{*}(\mathcal{Z}))$ are hereditary cotorsion triples in $\A$ and $\C$ respectively, then
$(\X,\Y,\Z)$ is a hereditary cotorsion triple in $\mathcal{B}$ if one of the following conditions is satisfied.
\begin{itemize}
\item[\rm (i)] $i_{*}i^{*}\cong {\Id}_{\B}$.
\item[\rm (ii)] $j_{!}j^{*}\cong {\Id}_{\B}$ and $j_{*}j^{*}\cong {\Id}_{\B}$.
\end{itemize}
    \end{itemize}
\end{corollary}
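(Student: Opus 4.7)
The plan is to derive this corollary as an immediate specialization of Proposition \ref{prop-cot-trip} to the abelian setting. Every abelian category carries a canonical extriangulated structure whose $\E$-triangles are precisely the short exact sequences; under this identification the extriangulated groups $\E^i$ coincide with the classical $\Ext^i$, so the extriangulated notions of (hereditary) cotorsion pair and (hereditary) cotorsion triple reduce to their usual abelian incarnations. The exactness assumptions on $i^*$ and $i^!$ and the closedness conditions $i_*i^*(\X)\subseteq\X$, $j_*j^*(\Y)\subseteq\Y$, etc., are phrased identically in both settings, so the hypotheses of the corollary match exactly the hypotheses of Proposition \ref{prop-cot-trip}.

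For part (1), starting from a hereditary cotorsion triple $(\X,\Y,\Z)$ in $\B$, I would invoke Proposition \ref{prop-cot-trip}(1)(a)--(c): parts (a) and (b) yield the cotorsion triples $(i^*(\X), i^*(\Y)=i^!(\Y), i^!(\Z))$ in $\A$ and $(j^*(\X), j^*(\Y), j^*(\Z))$ in $\C$, while (c) promotes them to hereditary cotorsion triples. No further calculation is required.

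For part (2), the induced hereditary cotorsion triples in $\A$ and $\C$ together with either condition (i) $i_*i^*\cong\Id_\B$ or condition (ii) $j_!j^*\cong\Id_\B$ and $j_*j^*\cong\Id_\B$ place us squarely in the setting of Proposition \ref{prop-cot-trip}(2). The first assertion of that proposition delivers that $(\X,\Y,\Z)$ is a cotorsion triple in $\B$, and its final clause propagates heredity from either end of the recollement into $\B$; since the hypothesis of the corollary is precisely that both induced triples are hereditary, the heredity of $(\X,\Y,\Z)$ follows. The only mild subtlety worth checking is that condition (ii) does not separately require containments of the form $j_!j^*(\X)\subseteq\X$ or $j_*j^*(\X)\subseteq\X$: these are automatic from the hypothesized isomorphisms $j_!j^*\cong\Id_\B$ and $j_*j^*\cong\Id_\B$, so the appeal to Proposition \ref{prop-cot-trip}(2) goes through without modification.
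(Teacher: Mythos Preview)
Your proposal is correct and matches the paper's approach: the corollary is stated in the paper without any explicit proof, being understood as an immediate specialization of Theorem~\ref{prop-cot-trip} to the abelian setting, which is exactly what you do. (Minor note: in the paper \ref{prop-cot-trip} is actually declared as a Theorem rather than a Proposition, but this does not affect the argument.)
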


\section{Construct a recollement of abelian categories}

Following \cite{Liu}, let $(\X,\Y)$ be a cotorsion pair in $\mathcal{C}$. $\C^+={\rm cone}(\Y, \X\cap\Y)$ and $\C^-={\rm cocone}(\X\cap\Y,\X)$.
It is clear that each of $\C^{+}$and $\C^{-}$ contains $\X \cap \Y$.
Denote the quotient of $\C^{+} \cap \C^{-}$by $\X \cap \Y$ as
$$
\underline{\mathcal{H}}=\C^{+} \cap \C^{-} / \X \cap \Y .
$$
Namely, $\underline{\mathcal{H}}$ has the same objects as $\C^{+} \cap \C^{-}$.
In particular, if $\X$ is a cluster tilting subcategory of $\C$, then $\C^+=\C=\C^-$.

For any $C, D \in \C^{+} \cap \C^{-}$,
$${\Hom}_{\underline{\mathcal{H}}}(C, D)={\Hom}_{\C^{+} \cap \C^{-}}(C, D) /\{f \in {\Hom}_{\C^{+} \cap \C^{-}}(C, D) \mid f \text{ factors through some object in } X\cap \Y\}.$$
 For any $f \in {\Hom}_{\C^{+} \cap \C^{-}}(C, D)$, denote its image in
 ${\Hom}_{\underline{\mathcal{H}}}(C, D)$ by $\underline{f}$. Then $\underline{\mathcal{H}}$ is an additive category which is called the heart of $(\X, \Y)$. Furthermore, we have the following theorem.

\begin{theorem}{\rm (\cite[Theorem 3.2]{Liu})}
For any cotorsion pair $(\X, \Y)$ in $\C$, the heart $\underline{\mathcal{H}}$ is an abelian category.
\end{theorem}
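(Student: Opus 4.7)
The plan is to verify the axioms of an abelian category directly on $\underline{\H}$ by constructing kernels and cokernels from the $\E$-triangle structure of $\C$, using the defining data of the cotorsion pair $(\X,\Y)$ together with the inclusions into $\C^{+}$ and $\C^{-}$. This mirrors the original construction of Nakaoka for hearts of cotorsion pairs in triangulated categories, adapted to the extriangulated setting by replacing cones of morphisms with pushouts (resp.\ pullbacks) of conflations, and by using the dimension-shifting lemma recalled in Section~2 as the substitute for the long exact sequence of a triangle.

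First I would check that $\underline{\H}$ is additive. Closure of $\C^{+}\cap\C^{-}$ under finite direct sums is immediate from the definitions of $\mathrm{Cone}$ and $\mathrm{Cocone}$, and the ideal generated by morphisms factoring through $\X\cap\Y$ is compatible with this sum, so the quotient $\underline{\H}$ inherits an additive structure. Along the way one verifies that $\C^{+}\cap\C^{-}$ is closed under direct summands, which uses Condition~\ref{WIC}. Second, given a representative $f:C\to D$ of a morphism $\underline{f}$ in $\underline{\H}$, I would build a cokernel as follows: take an $\E$-triangle $C\to W_C\to X_C\dashrightarrow$ coming from $C\in\C^{-}$ (so $W_C\in\X\cap\Y$ and $X_C\in\X$), form the pushout along $f$ to obtain an $\E$-triangle $D\to D'\to X_C\dashrightarrow$, and show that a suitable modification of $D'$ (adjusting by a conflation coming from $D\in\C^{+}$) lies in $\C^{+}\cap\C^{-}$. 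The induced morphism $\underline{D\to D'}$ should be verified to be a cokernel of $\underline{f}$ in $\underline{\H}$. The kernel is constructed dually by pulling back an $\E$-triangle $Y_D\to W_D\to D\dashrightarrow$ coming from $D\in\C^{+}$.

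The main obstacle will be showing that these are genuine kernels and cokernels in $\underline{\H}$ after passing to the quotient by $\X\cap\Y$: one needs the universal property to hold not for all morphisms in $\C$, but only up to factorization through $\X\cap\Y$. The central technical input is the vanishing $\E(\X,\Y)=0$ (axiom CP1), which via dimension shifting forces morphisms out of $D'$ into heart objects to lift through the pushout, and which is what makes factorizations through $X_C\in\X$ collapse in $\underline{\H}$. A related point is showing that if two representatives $f,g$ of $\underline{f}$ agree, the constructed cokernels are canonically isomorphic in $\underline{\H}$; this is a diagram chase that again relies on CP1 to produce the comparison map.

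Finally, to complete the abelianness, I would check the coimage-image axiom: for $\underline{f}:C\to D$, the canonical morphism $\mathrm{coim}\,\underline{f}\to\mathrm{im}\,\underline{f}$ is an isomorphism in $\underline{\H}$. Using the explicit pushout/pullback constructions, both sides can be exhibited as the middle term of a carefully chosen conflation with endpoints in $\X\cap\Y$, so the comparison morphism between them factors through $\X\cap\Y$ modulo an isomorphism in $\C$ and hence becomes invertible in $\underline{\H}$. I expect this step, together with the routine verification that monomorphisms (resp.\ epimorphisms) in $\underline{\H}$ are exactly those $\underline{f}$ whose kernel (resp.\ cokernel) vanishes, to finish the proof, with all the genuine difficulty concentrated in the cotorsion-pair-based construction of kernels and cokernels outlined above.
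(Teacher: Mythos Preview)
The paper does not prove this theorem at all: it is stated verbatim as a citation of \cite[Theorem~3.2]{Liu} and used as a black box for Section~5. There is no argument in the paper to compare your sketch against.

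Your proposal is a reasonable outline of the Liu--Nakaoka proof itself, and the overall architecture (build cokernels via pushouts along conflations coming from $C\in\C^{-}$, build kernels dually, then verify the coimage--image isomorphism) is indeed how \cite{Liu} proceeds. For the purposes of this paper, however, none of that is needed: you should simply cite \cite[Theorem~3.2]{Liu} and move on. If your intention is to reproduce the proof in full, be aware that the genuine technical work in \cite{Liu} is substantially more intricate than your sketch suggests---in particular, the construction of a well-defined cokernel object in $\C^{+}\cap\C^{-}$ requires an iterated reflection/coreflection procedure (the functors $\sigma^{+}$, $\sigma^{-}$ and their composite $H$ in \cite{Liu}), not just a single pushout, and the coimage--image step is a lengthy diagram chase occupying several pages there. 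Your sketch glosses over exactly the places where the extriangulated version diverges from the triangulated original.
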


\begin{proposition}\label{adjoint-}
{\rm (\cite[Proposition 4.3]{C13C})}
Suppose that $\A$ and $\B$ are two categories, and $(F, G)$ is an adjoint pair of functor with $F: \A \rightarrow \B$ and $G: \B\rightarrow \A$. Let $\X$ and $\Y$ be the subcategories of $\A$ and $\B$, respectively, such that $F(\X) \subseteq \Y$ and $G(\Y) \subseteq \X$. Then the following statements hold:
\begin{itemize}
\item[\rm (1)] The pair $(F, G)$ can be canonically extended to an adjoint pair $(\underline{F}, \underline{G})$ of functors between the quotient categories $\A / \X$ and $\B / \Y$ with $\underline{F}: \A / \X \rightarrow \B / \Y$ and $\underline{G}$ : $B / \Y \rightarrow \A / \X$
\item[\rm (2)] If $F$ (respectively, $G$) is fully faithful, then so is $\underline{F}$ (respectively, $\underline{G}$.
\end{itemize}
\end{proposition}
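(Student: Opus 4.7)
The plan is to prove (1) in two steps: first verify that $F$ and $G$ descend to well-defined additive functors $\underline{F}\colon\A/\X \to \B/\Y$ and $\underline{G}\colon\B/\Y \to \A/\X$, then show that the adjunction bijection descends to the quotients. For well-definedness of $\underline{F}$, I would take a morphism $f\colon a\to a'$ in $\A$ that factors through an object $x\in\X$, say $f = h\circ g$ with $g\colon a\to x$ and $h\colon x\to a'$. Since $F(\X)\subseteq\Y$ by hypothesis, $F(f)=F(h)\circ F(g)$ factors through $F(x)\in\Y$, hence represents the zero class in $\B/\Y$. The argument for $\underline{G}$ is symmetric via $G(\Y)\subseteq\X$.

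For the adjunction, the plan is to take the natural bijection $\Hom_{\B}(F(a),b)\cong\Hom_{\A}(a,G(b))$, under which a morphism $\phi\colon F(a)\to b$ transposes to $\psi = G(\phi)\circ\eta_a$ via the unit $\eta$, and to verify that it respects the ideals being quotiented out. If $\phi = h\circ g$ with $g\colon F(a)\to y$, $h\colon y\to b$ and $y\in\Y$, then $\psi = G(h)\circ G(g)\circ\eta_a$ factors through $G(y)\in\X$; conversely, if $\psi = k\circ j$ with $j\colon a\to x$, $k\colon x\to G(b)$ and $x\in\X$, then the transpose $\phi = \epsilon_b\circ F(\psi) = \epsilon_b\circ F(k)\circ F(j)$ factors through $F(x)\in\Y$. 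This yields the induced bijection $\Hom_{\B/\Y}(\underline{F}(a),b)\cong\Hom_{\A/\X}(a,\underline{G}(b))$, which remains natural in both variables since naturality is inherited from the original adjunction.

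For (2), I would invoke the standard fact that in an adjunction $F\dashv G$, the functor $F$ is fully faithful if and only if the unit $\eta\colon \Id_{\A}\to GF$ is a natural isomorphism (dually, $G$ is fully faithful iff the counit $\epsilon\colon FG\to\Id_{\B}$ is a natural isomorphism). Assuming $F$ fully faithful, suppose $\underline{F}(f)=0$ for some $f\colon a\to a'$ in $\A$, so that $F(f) = v\circ u$ with $u\colon F(a)\to y$, $v\colon y\to F(a')$ and $y\in\Y$. Naturality of $\eta$ gives $\eta_{a'}\circ f = GF(f)\circ\eta_a = G(v)\circ G(u)\circ\eta_a$, whence $f = \eta_{a'}^{-1}\circ G(v)\circ G(u)\circ\eta_a$ factors through $G(y)\in\X$, so $\underline{f}=0$ in $\A/\X$; this shows faithfulness. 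Fullness of $\underline{F}$ is immediate from fullness of $F$ at the level of representatives. The statement for $\underline{G}$ is proved dually using the counit $\epsilon$.

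The main obstacle I anticipate is the careful bookkeeping in the descent of the adjunction: one must verify that transposition sends the ideal of morphisms factoring through $\Y$ precisely onto the ideal of morphisms factoring through $\X$, which requires using naturality of $\eta$ and $\epsilon$ together with the hypotheses $F(\X)\subseteq\Y$ and $G(\Y)\subseteq\X$ in tandem. Once this compatibility is established the remaining checks are formal, and the result is purely categorical, requiring no extriangulated structure on $\A$ or $\B$.
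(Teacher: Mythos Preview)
Your argument is correct and essentially self-contained. Note, however, that the paper does not supply its own proof of this proposition: it is quoted verbatim from \cite[Proposition~4.3]{C13C} and used as a black box in the proof of Theorem~\ref{main-construc}. So there is no ``paper's proof'' to compare against; what you have written is the standard verification one would expect in the cited reference.

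One small remark on part~(2): your faithfulness argument for $\underline{F}$ is fine, and fullness is indeed immediate from fullness of $F$ on representatives. It is perhaps worth saying explicitly that the induced unit $\underline{\eta}\colon \Id_{\A/\X}\to \underline{G}\,\underline{F}$ is just the image of $\eta$ in the quotient, hence still a natural isomorphism when $\eta$ is; invoking the same unit/counit criterion at the level of the quotient categories then gives fully faithfulness of $\underline{F}$ in one line, without the explicit chase you wrote out. Either route works.
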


We are now in a position to provide a method for constructing a recollement of abelian categories.

\begin{theorem}\label{main-construc}
Let $(\A,\B,\C)$ be a recollement of extriangulated categories, and let $(\X, \Y)$ be a cotorsion pair in $\B$.
If $i_{*}i^{!}(\X) \subseteq \X$, $i_{*}i^{*}(\X) \subseteq \X$, $i_{*}i^{*}(\Y) \subseteq \Y$, $j_{*}j^{*}(\X) \subseteq \X$, $j_{*}j^{*}(\Y) \subseteq \Y$, $j_{!}j^{*}(\Y) \subseteq \Y$ and if $i^{*}$, $i^{!}$ are exact, then the abelian category $\underline{\mathcal{H}}$ admits a recollement relative to abelian categories $\underline{\mathcal{H'}}$ and $\underline{\mathcal{H''}}$ as follows
$$
  \xymatrix{\underline{\mathcal{H'}}\ar[rr]|{\underline{i_{*}}}&&\ar@/_1pc/[ll]|{\underline{i^{*}}}\ar@/^1pc/[ll]|{\underline{i^{!}}}\underline{\mathcal{H}}
\ar[rr]|{\underline{j^{\ast}}}&&\ar@/_1pc/[ll]|{\underline{j_{!}}}\ar@/^1pc/[ll]|{\underline{j_{\ast}}}\underline{\mathcal{H''}}}
$$
where $\underline{\mathcal{H}}, \underline{\mathcal{H'}}, \underline{\mathcal{H''}}$ are the hearts of $(\X, \Y)$, $(i^*(\X),i^{!}(\Y))$ and $(j^*(\X), j^*(\Y))$, respectively.
\end{theorem}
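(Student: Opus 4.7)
The strategy is to transfer the recollement of extriangulated categories down to the abelian hearts. First, Theorem~\ref{main-cotor} guarantees that $(i^*(\X), i^!(\Y))$ and $(j^*(\X), j^*(\Y))$ are cotorsion pairs in $\A$ and $\C$ respectively, with $i^*(\Y)=i^!(\Y)$; in particular $\underline{\mathcal{H'}}$ and $\underline{\mathcal{H''}}$ are abelian. The hypotheses, together with Lemma~\ref{lem-=if}, give all the stability inclusions we need: from $i_*i^*(\X)\subseteq \X$ we derive $i_*i^!(\Y)\subseteq \Y$, and from $j_*j^*(\Y)\subseteq \Y$ we derive $j_!j^*(\X)\subseteq \X$. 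Further, since $i^*$ and $i^!$ are exact, Lemma~\ref{lem-rec}(5)(5') yields that $j_!$ and $j_*$ are exact as well, so all six functors are exact.

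The key step is to show that each of the six functors descends to the hearts. For example, given $C\in\C^+$ witnessed by an $\E$-triangle $Y_0\to W_0\to C\dashrightarrow$ with $Y_0\in j^*(\Y)$ and $W_0\in j^*(\X)\cap j^*(\Y)$, applying the exact functor $j_!$ gives an $\E$-triangle $j_!(Y_0)\to j_!(W_0)\to j_!(C)\dashrightarrow$ in $\B$. Using $j^*j_!\cong\Id_\C$ to identify $Y_0\cong j^*j_!(Y_0)$ and the inclusion $j_!j^*(\Y)\subseteq\Y$, we obtain $j_!(Y_0)\in\Y$; similarly $j_!(W_0)\in \X\cap\Y$, so $j_!(C)\in\B^+$. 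Running the analogous argument for $\C^-$ and for each of the other five functors shows that each restricts to a functor between the appropriate categories $\A^+\cap\A^-$, $\B^+\cap\B^-$, $\C^+\cap\C^-$, and moreover sends the respective $\X\cap\Y$-style subcategory into the analogous intersection in the target. Consequently each of $i^*, i_*, i^!, j_!, j^*, j_*$ descends unambiguously to an additive functor between the quotient hearts.

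Having the six descended functors, I will apply Proposition~\ref{adjoint-} to each of the four adjoint pairs $(i^*,i_*)$, $(i_*,i^!)$, $(j_!,j^*)$, $(j^*,j_*)$ together with the matching subcategory pair. This delivers adjoint pairs on the hearts, hence the required adjoint triples $(\underline{i^*},\underline{i_*},\underline{i^!})$ and $(\underline{j_!},\underline{j^*},\underline{j_*})$; full faithfulness of $\underline{i_*},\underline{j_!},\underline{j_*}$ then follows from that of $i_*,j_!,j_*$ (condition R3 of the original recollement) via Proposition~\ref{adjoint-}(2). The last axiom $\underline{j^*}\,\underline{i_*}=0$ is immediate: since $j^*i_*=0$ by (R2), the composite lands in the zero object of $\C$, which represents the zero object of $\underline{\mathcal{H''}}$.

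The main obstacle is the second step: verifying for each of the six functors that it sends one heart into another requires chasing the defining $\E$-triangles of $\C^\pm$ through the functor and invoking the correct combination of assumed and derived stability inclusions, along with the natural isomorphisms $i^*i_*\cong\Id_\A$, $j^*j_!\cong\Id_\C$, $j^*j_*\cong\Id_\C$ recorded in Lemma~\ref{lem-rec}(1). Once this heart-preservation is established, the remaining steps are formal consequences of Proposition~\ref{adjoint-} and the axioms of the ambient extriangulated recollement.
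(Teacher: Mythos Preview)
Your proposal is correct and follows essentially the same route as the paper: invoke Theorem~\ref{main-cotor} to obtain the cotorsion pairs in $\A$ and $\C$, verify that each of the six functors carries $\B^{\pm}$, $\A^{\pm}$, $\C^{\pm}$ and the relevant $\X\cap\Y$--type subcategories to one another using exactness and the assumed stability inclusions, then apply Proposition~\ref{adjoint-} to descend the adjoint triples and the full faithfulness, and finish with $\underline{j^*}\,\underline{i_*}=0$ from $j^*i_*=0$. One small remark: in your sample computation for $j_!$, the invocation of $j^*j_!\cong\Id_\C$ is superfluous; what you actually use is that $Y_0\in j^*(\Y)$ means $Y_0\cong j^*(Y)$ for some $Y\in\Y$, whence $j_!(Y_0)\cong j_!j^*(Y)\in j_!j^*(\Y)\subseteq\Y$, and similarly $j_!(W_0)\in\X\cap\Y$ using both $j_!j^*(\X)\subseteq\X$ and $j_!j^*(\Y)\subseteq\Y$.
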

\begin{proof}
By Theorem \ref{main-cotor}, we know that $(i^{*}(\X),i^{!}(\Y))$ and $(j^{*}(\X),j^{*}(\Y))$ are cotorsion pairs in $\A$ and $\C$ respectively.
We first claim that $j^*(\B^{+})\subseteq \C^{+}$ and $j_*(\C^{+})\subseteq \B^{+}$.
In fact, for any $B\in \B^{+}$, there exists an $\E$-triangle
$$\xymatrix@C15pt{Y\ar[r]&W \ar[r]&B\ar@{-->}[r]&}$$
in $\B$ with $W \in \X\cap \Y$ and $Y \in \Y$.
One can get an $\E$-triangle
$$\xymatrix@C15pt{j^{*}(Y)\ar[r]&j^{*}(W) \ar[r]&j^{*}(B)\ar@{-->}[r]&}$$
in $\B$.
It is clear that $j^*(W) \in$ $j^*(\X \cap \Y) \subseteq j^*(\X) \cap j^*(\Y)$ and $j^*(Y) \in j^*(\Y)$.
Hence $j^*(B) \in \C^{+}$ and $j^{*}(\B^+)\subseteq \C^{+}$.

On the other hand, for any $C \in \C^{+}$, there exists an $\E$-triangle
$$\xymatrix@C15pt{Y''\ar[r]&W'' \ar[r]&C\ar@{-->}[r]&}$$
in $\C$ with $W'' \in j^{*}(\X)\cap j^{*}(\Y)$ and $Y'' \in j^{*}(\Y)$.
Since $j_{*}$ is exact, one can get that
$$\xymatrix@C15pt{j_{*}(Y'')\ar[r]&j_{*}(W'') \ar[r]&j_{*}(C)\ar@{-->}[r]&}$$
is an $\E$-triangle in $\B$, where $j_*(W'') \in j_*( j^{*}(\X)\cap j^{*}(\Y)) \subseteq j_*j^{*}(\X)\cap j_*j^{*}(\Y)\subseteq \X \cap \Y$
and $j_*(Y'') \in j_*(j^*(\Y)) \subseteq \Y$.
Hence $j_*(C) \in\B^{+}$ and $j_*(\C^+) \in\B^{+}$.

Simillary, we can prove the assertions $j^*(\B^{-})\subseteq \C^{-}$ and $j_*(\C^{-})\subseteq \B^{-}$.

Then, we have
$j^*(\B^{+} \cap \B^{-}) \subseteq j^*(\B^{+})\cap j^{*}(\B^{-})\subseteq\C^{+} \cap \C^{-}$ and $j_*(\C^{+} \cap \C^{-}) \subseteq j_*(\C^{+}) \cap j_{*}(\C^{-}) \subseteq \B^{+} \cap \B^{-}$.
By Proposition \ref{adjoint-},
the adjoint pair $(j^{*},j_{*})$ induces an adjoint pair $(\underline{j^*}, \underline{j_*})$ with $\underline{j^*}: \underline{\mathcal{H}} \rightarrow \underline{\mathcal{H''}}$ and $\underline{j_*}: \underline{\mathcal{H''}} \rightarrow \underline{\mathcal{H}}$, such that $\underline{j_*}$ is fully faithful.

Using similar arguments as above, we can prove that $j_!: \C \rightarrow \B$ induces an additive functor of abelian categories $\underline{j_!}: \underline{\mathcal{H''}} \rightarrow \underline{\mathcal{H}}$ such that $(\underline{j_!}, \underline{j^*})$ is a adjoint pair and $\underline{j_{!}}$ is a fully faithful;
$i^*, i_*, i^!$ induce additive functors of abelian categories $\underline{i^*}$ : $\underline{\mathcal{H}} \rightarrow \underline{\mathcal{H'}}$, $\underline{i_*}: \underline{\mathcal{H'}} \rightarrow \underline{\mathcal{H}}$, $\underline{\underline{i}}^{!}: \underline{\mathcal{H}} \rightarrow \underline{\mathcal{H'}}$, respectively, such that $(\underline{i^*}, \underline{i_*})$, $(\underline{i_*}, \underline{i^{!}})$ are adjoint pairs and $\underline{i_*}$ is a fully faithful.
Since $j^* i_*=0$, we get $\underline{j}^* \underline{i}_*=0$. Then we finish the proof.
\end{proof}
In particular, for a cluster tilting subcategory $\X$ of $\B$, %that is, $(\X,\X)$ is a cotorsion pair in $\B$,
by Lemma \ref{lem-=if}, the assertions $i_{*}i^{*}(\X) \subseteq \X$ and $j_{*}j^{*}(\Y) \subseteq \Y$ can deduce all ``$\subseteq$'' in Theorem \ref{main-construc}.
Then we have the following result.

\begin{corollary}{\rm (\cite[Theorem 4.5]{HHZ21T})}
Let $(\A,\B,\C)$ be a recollement of extriangulated categories, and let $\X$ be a cluster tilting subcategory of $\B$.
If $i_{*}i^{*}(\X) \subseteq \X$, $j_{*}j^{*}(\Y) \subseteq \Y$, and if $i^{*}$, $i^{!}$ are exact, then the abelian category $\B/\X$ admits a recollement relative to abelian categories $\A/i^*(\X)$ and $\C/j^{*}(\X)$ as follows
$$
  \xymatrix{\A/i^*(\X)\ar[rr]|{\underline{i_{*}}}&&\ar@/_1pc/[ll]|{\underline{i^{*}}}\ar@/^1pc/[ll]|{\underline{i^{!}}}\B/\X
\ar[rr]|{\underline{j^{\ast}}}&&\ar@/_1pc/[ll]|{\underline{j_{!}}}\ar@/^1pc/[ll]|{\underline{j_{\ast}}}
\C/j^*(\X)}.
$$
\end{corollary}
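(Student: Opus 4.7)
The plan is to deduce this corollary by directly specializing Theorem \ref{main-construc} to the case $\Y=\X$ (which is exactly what ``$\X$ is a cluster tilting subcategory'' means, namely that $(\X,\X)$ is a cotorsion pair in $\B$). So the task splits into two pieces: verify that all the ``$\subseteq$'' hypotheses of Theorem \ref{main-construc} are implied by the two listed inclusions plus exactness of $i^*, i^!$, and then identify each of the three hearts with the stated additive quotient.

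First I would handle the containments. With $\Y=\X$, the six assumptions of Theorem \ref{main-construc} collapse to four, namely $i_*i^*(\X)\subseteq\X$, $i_*i^!(\X)\subseteq\X$, $j_*j^*(\X)\subseteq\X$, and $j_!j^*(\X)\subseteq\X$. Two of these are hypotheses (the given $i_*i^*(\X)\subseteq\X$, and $j_*j^*(\Y)\subseteq\Y$ which is just $j_*j^*(\X)\subseteq\X$). The remaining two come from Lemma \ref{lem-=if}: part (1), applied to the cotorsion pair $(\X,\X)$ with $i^*,i^!$ exact, gives $i_*i^!(\X)\subseteq\X$ from $i_*i^*(\X)\subseteq\X$; and part (2) gives $j_!j^*(\X)\subseteq\X$ from $j_*j^*(\X)\subseteq\X$. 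Thus all hypotheses of Theorem \ref{main-construc} hold for the cotorsion pair $(\X,\X)$ in $\B$.

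Next I would identify the hearts. By Theorem \ref{main-cotor}(1) and (2), $(i^*(\X),i^!(\X))$ and $(j^*(\X),j^*(\X))$ are cotorsion pairs in $\A$ and $\C$, respectively. Lemma \ref{lem-=} applied with the inclusions $i_*i^*(\X)\subseteq\X$ and $i_*i^!(\X)\subseteq\X$ yields $i^*(\X)=i^!(\X)$, so the induced cotorsion pair on $\A$ is $(i^*(\X),i^*(\X))$, i.e.\ $i^*(\X)$ is cluster tilting in $\A$; likewise $j^*(\X)$ is cluster tilting in $\C$. Since the paper already notes that when $\X$ is a cluster tilting subcategory of an extriangulated category $\mathcal{C}$ one has $\mathcal{C}^+=\mathcal{C}=\mathcal{C}^-$ (so that $\underline{\mathcal{H}}=\mathcal{C}/\X$), we immediately get $\underline{\mathcal{H}}=\B/\X$, $\underline{\mathcal{H'}}=\A/i^*(\X)$, and $\underline{\mathcal{H''}}=\C/j^*(\X)$.

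Finally I would invoke Theorem \ref{main-construc} to obtain the desired recollement of abelian categories $\underline{\mathcal{H'}}\to\underline{\mathcal{H}}\to\underline{\mathcal{H''}}$ with functors $\underline{i^*},\underline{i_*},\underline{i^!},\underline{j_!},\underline{j^*},\underline{j_*}$, and translate it through the identifications above. I do not expect a serious obstacle here: the only subtlety is checking that the hypotheses of Lemma \ref{lem-=if} are actually met (which needs only the exactness of $i^*$ and $i^!$ and the cotorsion-pair condition), and that $(\X,\X)$ legitimately qualifies as an input to Theorem \ref{main-construc}; both are routine. The corollary is then just a clean specialization.
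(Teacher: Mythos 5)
Your proposal is correct and follows essentially the same route as the paper: the authors likewise note that for $\Y=\X$ the two stated inclusions together with Lemma \ref{lem-=if} yield all the ``$\subseteq$'' hypotheses of Theorem \ref{main-construc}, and the hearts are identified with the additive quotients via the observation that $\C^{+}=\C=\C^{-}$ for a cluster tilting subcategory (with $i^{*}(\X)=i^{!}(\X)$ from Lemma \ref{lem-=}). Your write-up is in fact slightly more explicit than the paper's one-line justification, but the argument is the same.
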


By applying Theorem \ref{main-construc} to triangulated categories, we get the following the result.
\begin{corollary}
{\rm (\cite[Theorem 4.4]{C13C})}
Let $(\A,\B,\C)$ be a recollement of triangulated categories. and let $(\X, \Y)$ be a cotorsion pair of $\B$.
If $i_{*}i^{!}(\X) \subseteq \X$, $i_{*}i^{*}(\X) \subseteq \X$, $i_{*}i^{*}(\Y) \subseteq \Y$, $j_{*}j^{*}(\X) \subseteq \X$, $j_{*}j^{*}(\Y) \subseteq \Y$, $j_{!}j^{*}(\Y) \subseteq \Y$, then the abelian category $\underline{\mathcal{H}}$ admits a recollement relative to abelian categories $\underline{\mathcal{H'}}$ and $\underline{\mathcal{H''}}$ as follows
$$
  \xymatrix{\underline{\mathcal{H'}}\ar[rr]|{\underline{i_{*}}}&&\ar@/_1pc/[ll]|{\underline{i^{*}}}\ar@/^1pc/[ll]|{\underline{i^{!}}}\underline{\mathcal{H}}
\ar[rr]|{\underline{j^{\ast}}}&&\ar@/_1pc/[ll]|{\underline{j_{!}}}\ar@/^1pc/[ll]|{\underline{j_{\ast}}}\underline{\mathcal{H''}}}
$$
where $\underline{\mathcal{H}}, \underline{\mathcal{H'}}, \underline{\mathcal{H''}}$ are the hearts of $(\X, \Y)$, $(i^*(\X),i^{!}(\Y))$ and $(j^*(\X), j^*(\Y))$, respectively.
\end{corollary}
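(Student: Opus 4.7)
The plan is to lift each of the six functors of the recollement to functors between the hearts, and then check that the recollement axioms survive this passage. As a first step, Theorem \ref{main-cotor} guarantees that $(i^{*}(\X), i^{!}(\Y))$ is a cotorsion pair in $\A$ and $(j^{*}(\X), j^{*}(\Y))$ is a cotorsion pair in $\C$, so the three hearts $\underline{\mathcal{H}}$, $\underline{\mathcal{H'}}$ and $\underline{\mathcal{H''}}$ are genuine abelian categories. Since $i^{*}$ and $i^{!}$ are exact, Lemma \ref{lem-rec}(5)(5$'$) also tells us that $j_{*}$ and $j_{!}$ are exact, which will be used repeatedly below. Moreover, Lemma \ref{lem-=if} shows that the hypotheses $i_{*}i^{*}(\X)\subseteq\X$ and $j_{*}j^{*}(\Y)\subseteq\Y$ already force $i_{*}i^{!}(\Y)\subseteq\Y$ and $j_{!}j^{*}(\X)\subseteq\X$, respectively.

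The technical heart of the argument is a ``$\pm$ preservation'' check for each functor: I will show that each of $i^{*}, i_{*}, i^{!}, j_{!}, j^{*}, j_{*}$ sends the $(+)$-layer of its source cotorsion category into the $(+)$-layer of the target, and likewise for the $(-)$-layer, and similarly sends the relevant $\X\cap\Y$-subcategory into the appropriate counterpart. The template is illustrated by $j_{*}(\C^{+})\subseteq\B^{+}$: taking an $\E$-triangle $Y''\to W''\to C\dashrightarrow$ in $\C$ with $W''\in j^{*}(\X)\cap j^{*}(\Y)$ and $Y''\in j^{*}(\Y)$, the exactness of $j_{*}$ yields an $\E$-triangle $j_{*}(Y'')\to j_{*}(W'')\to j_{*}(C)\dashrightarrow$ in $\B$ whose middle term lies in $j_{*}j^{*}(\X)\cap j_{*}j^{*}(\Y)\subseteq\X\cap\Y$ and whose left term lies in $j_{*}j^{*}(\Y)\subseteq\Y$. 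Each of the remaining five functors is handled by an entirely analogous argument, using Lemma \ref{lem-=} to identify $i^{*}(\Y)$ with $i^{!}(\Y)$ whenever necessary, and invoking exactly one of the six subcategory hypotheses. The ``$-$'' case is completely dual.

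With these inclusions in hand, Proposition \ref{adjoint-} applies: each of the adjoint pairs $(i^{*},i_{*})$, $(i_{*},i^{!})$, $(j_{!},j^{*})$, $(j^{*},j_{*})$ descends to an adjoint pair of functors between the corresponding hearts, and the fully faithful functors $i_{*},\ j_{!},\ j_{*}$ induce fully faithful functors $\underline{i_{*}},\ \underline{j_{!}},\ \underline{j_{*}}$. Finally, the identity $j^{*}i_{*}=0$ trivially passes to the quotients to give $\underline{j^{*}}\,\underline{i_{*}}=0$, which together with the previous items verifies all three conditions (a), (b), (c) characterising a recollement of abelian categories.

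The main obstacle is essentially bookkeeping: one must systematically carry out the preservation check for all twelve inclusions (six functors, each acting on the $(+)$- and $(-)$-layers) plus the six inclusions for the projective-injective subcategories, and at each step pair the correct functor with the correct exactness property and the correct one of the subcategory hypotheses listed in the theorem. A mild subtlety is that the cotorsion pair on $\A$ is naturally written as $(i^{*}(\X), i^{!}(\Y))$ but the description of $\A^{\pm}$ occasionally calls for $i^{*}(\Y)$ instead of $i^{!}(\Y)$; Lemma \ref{lem-=} mediates between the two, and deserves to be invoked carefully whenever one passes through $\A$.
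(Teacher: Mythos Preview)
Your argument is correct and in fact reproduces, essentially verbatim, the proof the paper gives for Theorem \ref{main-construc}. The paper, however, does not reprove anything here: it obtains the corollary as an immediate specialisation of Theorem \ref{main-construc}, the only point being that in a recollement of \emph{triangulated} categories all six functors are triangle functors and hence automatically exact in the extriangulated sense (cf.\ the remark following Definition \ref{right}), so the extra hypothesis ``$i^{*}$, $i^{!}$ exact'' in Theorem \ref{main-construc} is satisfied for free. Your approach buys nothing new over simply citing Theorem \ref{main-construc}; conversely, the paper's one-line deduction buys economy but of course relies on Theorem \ref{main-construc} having been established. A small presentational gap in your write-up is that you invoke ``Since $i^{*}$ and $i^{!}$ are exact'' without saying why; you should make explicit that this is where the triangulated hypothesis enters.
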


{\bf Xin Ma}\\
College of Science, Henan University of Engineering, 451191 Zhengzhou, Henan, P. R. China\\
E-mail: maxin@haue.edu.cn
\\[0.3cm]
\textbf{Panyue Zhou}\\
School of Mathematics and Statistics, Changsha University of Science and Technology, 410114 Changsha, Hunan,  P. R. China\\
E-mail: panyuezhou@163.com


\begin{thebibliography}{99}
\setlength{\itemsep}{-5pt}


\bibitem{AT22H}
T. Adachi, M. Tsukamoto,
{\it Hereditary cotorsion pairs and silting subcategories in extriangulated categories}, J. Algebra {\bf 594} (2022), 109--137.

\bibitem{BBD}
A. Be{\u\i}linson, J. Bernstein, P. Deligne,
 Faisceaux pervers,
 in: Analysis and topology on singular spaces, {I}, Luminy,
  1981, {Ast\'erisque}, {vol. 100}, Soc. Math. France,
  Paris, 1982, 5--171.




\bibitem{Ben} R. Bennett-Tennenhaus, A. Shah, {\it Transport of structure in higher homological algebra}, J. Algebra
{\bf 574} (2021), 514--549.

% \bibitem{Bu} T. B\"{u}hler, {\it Exact categories}, Expo. Math. {\bf 28} (2010), 1--69.

\bibitem{C13C} J. Chen, {\it Cotorsion pairs in a recollement of triangulated categories},
Comm. Algebra {\bf 41} (2013), 2903--2915.

\bibitem{Fr} V. Franjou, T. Pirashvili, {\it Comparison of abelian categories recollements}, Doc. Math. {\bf 9} (2004), 41--56.
    \bibitem{FH22T} X. Fu, Y. Hu, {\it The recollements of abelian categories: cotorsion dimensions and cotorsion triples},
Bull. Iranian Math. Soc. {\bf 48} (2022), 963--977.
\bibitem{GNP21} M. Gorsky, H. Nakaoka, Y. Palu, {\it Positive and negative extensions in extriangulated categories}, arXiv:2103.12482.
\bibitem{GMT} W. Gu, X. Ma, L. Tan, {\it Homological dimensions of extriangulated categories and recollements}, 	 arXiv:2104.06042.


\bibitem{HHZ21T} J. He, Y. Hu, P. Zhou, {\it Torsion pairs and recollements of extriangulated categories}, Comm. Algebra {\bf 50} (2022), 2018--2036.




\bibitem{HZZ21G} J. Hu, D. Zhang, P. Zhou. {\it Gorenstein homological dimensions for extriangulated categories}.
Bull. Malays. Math. Sci. Soc. {\bf 44}  (2021) 2235--2252.

\bibitem{HZZ20P} J. Hu, D. Zhang, P. Zhou, {\it Proper classes and Gorensteinness in extriangulated categories},
J. Algebra {\bf 551}(2020), 23--60.

\bibitem{HZZ21P} J. Hu, D. Zhang, P. Zhou, {\it Proper resolutions and Gorensteinness in extriangulated categories}, Front. Math. China {\bf 16} (2021), 95--117.



\bibitem{HZ21R} Y. Hu, P. Zhou, {\it Recollements arising from cotorsion pairs on extriangulated categories}, Front. Math. China {\bf 16} (2021), 937--955.



 \bibitem{INY18A} O. Iyama, H. Nakaoka, Y. Palu, {\it Auslander-Reiten theory in extriangulated categories}, arXiv: 1805.03776.


\bibitem{K} C. Klapproth, $n$-extension closed subcategories of $n$-exangulated categories. arXiv: 2209.01128v3.


\bibitem{L13H} Y. Liu, {\it Hearts of twin cotorsion pairs on exact categories}, J. Algebra {\bf 394} (2013), 245--284.

\bibitem{Liu} Y. Liu, H. Nakaoka, {\it Hearts of twin cotorsion pairs on extriangulated categories},
J. Algebra {\bf 528} (2019), 96--149.



% \bibitem{L17G} M. Lu, {\it Gorenstein defect categories of triangular matrix algebras}, J. Algebra {\bf 480} (2017), 346--367.



\bibitem{MZZ} X. Ma, T. Zhao, X. Zhuang, {\it Resolving subcategories and dimensions in recollements of extriangulated categories}, Bull. Malays. Math. Sci. Soc. {\bf 46} (2023), 36.



\bibitem{MD06} L. Mao, N. Ding, {\it The cotorsion dimension of modules and rings}, Lect. Notes Pure Appl. Math. {\bf 249} (2006), 217--233.

\bibitem{Na} H. Nakaoka, Y. Palu, {\it Extriangulated categories, Hovey twin cotorsion pairs and model structures}, Cah. Topol. G\'{e}om. Diff\'{e}r. Cat\'{e}g. {\bf 60} (2019), 117--193.

\bibitem{N13G} H. Nakaoka, {\it General heart construction on a triangulated category (I): Unifying $t$-structures and cluster tilting subcategories}, Appl. Categ. Structures {\bf 19} (2011), 879--899.

\bibitem{PC14H} C. Psaroudakis, {\it Homological theory of recollements of abelian categories}, J. Algebra {\bf 398} (2014), 63--110.

\bibitem{WWZ20R} L. Wang, J. Wei, H. Zhang, {\it Recollements of extriangulated categories}, Colloq. Math. {\bf 167} (2022), 239--259.

\bibitem{ZZ18T} P. Zhou, B. Zhu,
{\it Triangulated quotient categories revisited},
J. Algebra {\bf 502} (2018), 196--232.

\bibitem{ZZ19C} P. Zhou, B. Zhu,
{\it Cluster-tilting subcategories in extriangulated categories},
Theory Appl. Categ. {\bf 34} (2019), 221--242.


\bibitem{ZZ20T} B. Zhu, X. Zhuang, {\it Tilting subcategories in extriangulated categories}, Front. Math. China {\bf 15} (2020), 225--253.
\bibitem{ZZ21G} B. Zhu, X. Zhuang, {\it Grothendieck groups in extriangulated categories}, J. Algebra {\bf 574} (2021), 206--232.


\end{thebibliography}
\end{document}